\newcommand{\ra}{\rightarrow}
\newcommand{\Ra}{\Rightarrow}
\newcommand{\La}{\Leftarrow}
\newcommand{\eps}{\varepsilon}
\newcommand{\N}{\mathbb{N}}
\newcommand{\B}{\mathfrak{B}}
\newcommand{\Cant}{2^\omega}
\newcommand{\upto} {\upharpoonright}
\newcommand{\ML}{Martin-L\"of}
\newcommand{\MLR}{Martin-L\"of random}
\newcommand{\abs}[1]{\lvert#1\rvert}
\title{KL-randomness and effective dimension under strong reducibility}
\titlerunning{Medvedev degrees and dimension}  % abbreviated title (for running head)
\author{Bj{\o}rn Kjos-Hanssen \inst{1}\orcidID{0000-0002-1825-0097}\thanks{This work was partially supported by a grant from the Simons Foundation (\#704836 to Bj\o rn Kjos-Hanssen).} \and
David J. Webb\inst{1}\orcidID{0000-0002-5031-7669}
}
\authorrunning{Main Author et al.} % abbreviated author list (for running head)
\institute{
University of Hawai\textquoteleft i at M\=anoa, Honolulu HI 96822, U.S.A.,\\
\email{bjoern.kjos-hanssen@hawaii.edu, dwebb42@hawaii.edu},\\ WWW home page:
\texttt{http://math.hawaii.edu/wordpress/bjoern/}
}
\date{January 2021}
\begin{document}

\maketitle

\begin{abstract}
We show that the (truth-table) Medvedev degree KLR of Kolmogorov--Loveland randomness coincides with that of \ML\ randomness, MLR, answering a question of Miyabe.
Next, an analogue of complex packing dimension is studied which gives rise to a set of weak
truth-table Medvedev degrees isomorphic to the Turing degrees.
\keywords{algorithmic randomness, effective dimension, Medvedev reducibility}
\end{abstract}

\section{Introduction}

	Computability theory is concerned with the relative computability of reals, and of collections of reals. The latter can be compared by various means, including Medvedev and Muchnik reducibility.
	Among the central collections considered are those of
	completions of Peano Arithmetic, Turing complete reals, Cohen generic reals, random reals, and various weakenings of randomness such as reals of positive effective Hausdorff dimension.

	Perhaps the most famous open problem in algorithmic randomness \cite{MR2732288,MR2548883} is whether Kolmogorov--Loveland randomness is equal to \ML\ randomness.
	Here we show that at least they are Medvedev equivalent.

	%The proof was first described in Kjos-Hanssen's review of \cite{MR3890460} published in Mathematical Reviews MR3890460.

	Randomness extraction in computability theory concerns whether reals that are close (in some metric) to randoms can compute random reals. A recent example is \cite{MR3721461}.
	That paper does for Hausdorff dimension what was done for a notion intermediate between packing dimension and Hausdorff dimension in \cite{MR3116543}.
	That intermediate notion, \emph{complex packing dimension}, has a natural dual which we introduce in this article.
	Whereas our result on KL-randomness is positive, we establish some negative (non-reduction) results for our new \emph{inescapable dimension} and for relativized complex packing dimension (in particular Theorem \ref{tt}). These results are summarized in Figure 1.
	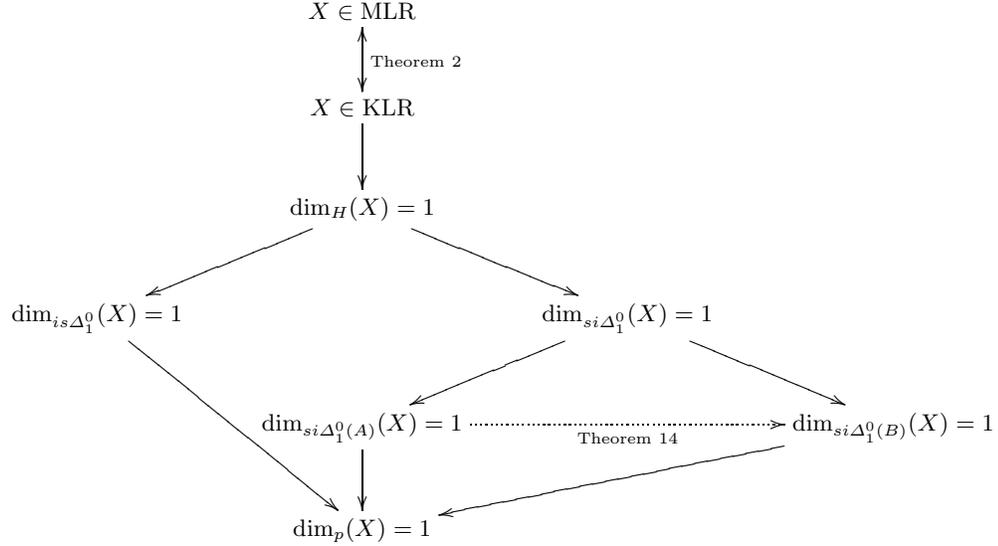
\begin{figure}
\[
\xymatrix{
&X\in\mathrm{MLR} \ar[d]\\
&X\in\mathrm{KLR}\ar[d]\ar_{\text{Theorem \ref{bjoern}}}[u]\\
&\dim_H(X)=1\ar[dl]\ar[dr] \\
\dim_{is\Delta^0_1}(X)=1\ar[ddr] && \dim_{si\Delta^0_1}(X)=1\ar[dl]\ar[dr]\\
&\dim_{si\Delta^0_1(A)}(X)=1\ar[d]\ar_{\text{Theorem \ref{tt}}}@{..>}[rr]&& \dim_{si\Delta^0_1(B)}(X)=1\ar[dll]\\ %\ar@{-->}[l]\\
&\dim_p(X)=1
}
\]
    \caption{Truth-table Medvedev degrees of mass problems associated with randomness and dimension. Here $\mathcal C\to\mathcal D$ means $\mathcal C\ge_s \mathcal D$,
    dotted arrow means $\mathcal C\not\ge_s \mathcal D$ and we assume $A\not\le_TB$.}
    \label{fig:my_label}
\end{figure}

	Let CR, SR, KLR, and MLR be the classes of computably random, Schnorr random, Kolmogorov-Loveland random, and \MLR\ reals, respectively.
	For basic definitions from algorithmic randoness, the reader may consult two recent monographs \cite{MR2732288,MR2548883}.
	Let $\le_s$ denote the uniform (strong) reducibility of mass problems known as Medvedev reducibility, and let $\le_w$ denote the non-uniform (weak) version known as Muchnik reducibility.
	It was shown by Nies, Stephan and Terwijn \cite{MR2140044} that $\mathrm{CR}\le_w\mathrm{SR}$.
	Miyabe \cite{MR3890460} obtains an interesting counterpoint by showing as his main theorem that $\mathrm{CR}\not\le_s\mathrm{SR}$.

	\begin{theorem}[{\cite{MR2183813}}]\label{Merkle}
		Given a KL-random set $A=A_0\oplus A_1$, at least one of $A_0$, $A_1$ is ML-random.
	\end{theorem}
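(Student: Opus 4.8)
The plan is to prove the contrapositive: assuming that neither $A_0$ nor $A_1$ is \MLR, I will construct a partial computable non-monotonic betting strategy that succeeds on $A=A_0\oplus A_1$, contradicting the hypothesis that $A$ is Kolmogorov--Loveland random. From the failure of \ML\ randomness of $A_i$ (for $i\in\{0,1\}$) one first extracts a left-c.e.\ martingale $d_i$ with $d_i(\emptyset)\le 1$ and $\limsup_n d_i(A_i\upto n)=\infty$: if $(U^i_n)_n$ is an \ML\ test capturing $A_i$, take $d_i=\sum_n w_n 2^n e^i_n$, where $e^i_n$ is the measure-based martingale of $U^i_n$ and the weights satisfy $\sum_n w_n\le 1$ and $w_n 2^n\to\infty$. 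It is then convenient to stash a fixed fraction of the capital: replace $d_i$ by $\tfrac12+\tfrac12 d_i$, a left-c.e.\ martingale that is still unbounded along $A_i$ but now bounded below by $\tfrac12$, so that one factor can serve as a positive floor while the other one grows without bound.

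The combined strategy runs two ``workers'': worker $0$ bets on the even positions of $A$ (which carry $A_0$) according to $d_0$, worker $1$ bets on the odd positions according to $d_1$, and the combined capital is the product of the two workers' capitals. As a function of the finite prefixes of $A$ this product is a (super)martingale $D$ with $D(\emptyset)\le 1$; since $d_0$ is unbounded along $A_0$ and $d_1$ stays above $\tfrac12$, we get $\limsup_n D(A\upto n)=\infty$. Note that if $d_0$ and $d_1$ were computable the two workers could simply take turns one position at a time and $D$ would already be realized by an honest \emph{monotonic} betting strategy, so non-monotonicity is forced on us solely by the next point.

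The actual content, and where I expect the main obstacle to lie, is that $d_0$ and $d_1$ are only left-c.e., so $D$ cannot be run verbatim by a total computable betting strategy; a left-c.e.\ martingale unbounded along $A$ merely witnesses $A\notin\mathrm{MLR}$, which we are not entitled to assume (recall that whether $\mathrm{KLR}=\mathrm{MLR}$ is exactly the open problem discussed above). Here non-monotonicity together with \emph{partiality} is used essentially: before a worker commits a bet at a node it waits until the current stage-$s$ approximation of its martingale has stabilized at the two relevant children (for instance, until the approximate martingale equality holds there), and while it waits the combined strategy lets the other worker advance, or merely scans further positions placing trivial bets; since the strategy need only succeed along the single sequence $A$, the approximations do stabilize along that path, every deferred bet is eventually committed, each worker exhausts its half of $A$, and the product capital follows $D$ along $A$ closely enough to stay unbounded. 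The delicate part is the scheduling: one must interleave the two workers and the waiting periods so that (i) each worker keeps advancing, (ii) a bet committed on a not-yet-stabilized approximation never costs more than a controlled multiplicative amount, and (iii) the whole procedure is a genuine partial computable non-monotonic betting strategy. Once this is arranged, that strategy succeeds on $A$, contradicting $A\in\mathrm{KLR}$ and proving the theorem.
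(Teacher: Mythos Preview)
The paper does not give its own proof of this theorem; it is simply quoted from \cite{MR2183813} and used as a black box in the proof of Theorem~\ref{bjoern}. So there is no in-paper argument to compare against.

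That said, your outline is the standard Merkle--Miller--Nies--Reimann--Stephan argument and is correct in its architecture: take the contrapositive, convert the two \ML\ tests into left-c.e.\ martingales, apply the savings trick so neither factor can drop below $\tfrac12$, and realize the product martingale on the even/odd split via a non-monotonic strategy that alternates workers and exploits partiality to wait for approximations. One point worth tightening: in the original presentation the argument is usually phrased with \emph{two} (total) non-monotonic strategies, one prioritizing each half, with the conclusion that at least one succeeds; your single-strategy version is fine precisely because KL-randomness permits \emph{partial} strategies, but you should make explicit why waiting forever is harmless (you only need success along the particular $A$ for which both $d_i$ genuinely diverge). The ``delicate part'' you flag---the scheduling---is exactly where the work is, and as written this is a plan rather than a proof; to complete it you must specify a concrete interleaving (e.g., worker $i$ commits its next bet once the stage-$s$ approximation satisfies $d_{i,s}(\sigma 0)+d_{i,s}(\sigma 1)\ge 2d_{i,s}(\sigma)-2^{-|\sigma|}$) and verify that the resulting capital along $A$ stays within a fixed multiplicative constant of the true product $d_0(A_0\upto m)\cdot d_1(A_1\upto m)$.
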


	As a corollary, $\mathrm{MLR}\le_w\mathrm{KLR}$. Miyabe \cite{MR3890460} posed one open problem ---
		is $\mathrm{MLR}\le_s\mathrm{KLR}$?
	--- which we answer in Theorem \ref{bjoern}.

	Let $K(\sigma)$ denote the prefix-free Kolmogorov complexity of a string $\sigma\in 2^{<\omega}$, and let $K_s(\sigma)$ be a computable nonincreasing approximation of $K(\sigma)$ in stages $s\in\omega$. The prefix of $A$ of length $n$ is denoted $A\upto n$.

	\begin{theorem}\label{bjoern}
		$\mathrm{MLR}\le_s\mathrm{KLR}$.
	\end{theorem}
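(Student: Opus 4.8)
The plan is to exhibit a single truth-table functional $\Gamma$ with $\Gamma^A\in\mathrm{MLR}$ for every $A\in\mathrm{KLR}$. The only obstacle is that Theorem~\ref{Merkle} is non-uniform: it tells us that one of $A_0,A_1$ is \MLR\ but not which, and membership in $\mathrm{MLR}$ is undecidable, so we cannot merely wait for the bad column to expose itself --- and irrevocably committing infinitely many bits of the wrong column would be fatal. The remedy is a guess-and-switch construction played against the universal \ML\ test. Fix a universal \ML\ test $\{U_k\}_{k\in\omega}$; replacing $U_k$ by $\bigcap_{j\le k}U_j$ we may assume $U_0\supseteq U_1\supseteq\cdots$, so that $X\in\mathrm{MLR}$ if and only if $X\notin U_k$ for all sufficiently large $k$.

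I would define $\Gamma^A$ by a process running in steps $t=0,1,2,\dots$ which maintains a current column index $g\in\{0,1\}$ (initially $0$) and a current level $\ell\in\omega$ (initially $0$). Writing $A_0,A_1$ for the two columns of $A$, at step $t$ the process commits the bit $\Gamma^A(t):=A_g(t)$, and then checks whether some prefix of $A_g\upto t$ has been enumerated into $U_\ell$ within $t$ stages; if so, it sets $g:=1-g$ and $\ell:=\ell+1$ before continuing. This $\Gamma$ is total on every oracle, and computing $\Gamma^A\upto n$ queries only $A\upto 2n$, so $\Gamma$ is a truth-table functional.

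Next I would verify that $\Gamma^A\in\mathrm{MLR}$ whenever at least one of $A_0,A_1$ is \MLR, which by Theorem~\ref{Merkle} is the case for every $A\in\mathrm{KLR}$. The crucial point is that the process switches only finitely often: if $A_i\in\mathrm{MLR}$ then $A_i\notin U_\ell$ for all $\ell$ past some $k_i$, so while the process is on column $i$ with $\ell\ge k_i$ no further check fires and it stays there forever; and if the other column is not \MLR\ then, belonging to every $U_\ell$, its check always eventually fires. Since each switch strictly increases $\ell$ and switches alternate the column, the process is, after finitely many switches, parked forever on an \MLR\ column $A_i$. Hence $\Gamma^A$ is a finite string followed by a tail of the \MLR\ sequence $A_i$, and since $\mathrm{MLR}$ is invariant under adjoining or deleting a finite prefix, $\Gamma^A\in\mathrm{MLR}$.

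The step demanding the most care is the truth-table implementation of the check. The naive test ``wait until some initial segment of $A_g$ enters $U_\ell$'' has unbounded use of $A_g$; one must instead confirm that the stage-$t$, length-$\le t$ approximation used above still eventually detects the non-randomness of a bad column, while keeping the use of $\Gamma^A(n)$ bounded by a fixed computable function of $n$ (here $2n$). The remaining ingredients --- that a universal \ML\ test may be taken nested, and that $\mathrm{MLR}$ is a tail property --- are standard, and the rest is routine bookkeeping.
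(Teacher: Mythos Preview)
Your proof is correct and follows essentially the same strategy as the paper's: a guess-and-switch procedure that commits bits of the current column $A_g$ and switches upon seeing fresh evidence of non-randomness, relying on Theorem~\ref{Merkle} to ensure only finitely many switches and hence an \MLR\ tail. The sole difference is cosmetic---the paper triggers a switch when the stage-$s$ approximated randomness deficiency $c_s$ of the current column increases (via $K_s$), whereas you trigger on entry into the next level $U_\ell$ of a nested universal \ML\ test; these are equivalent detectors of non-randomness, and both yield a truth-table reduction with use bounded by $2n$, exactly as the paper notes after its proof.
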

	\begin{proof}
		Given a KL-random set $A=A_0\oplus A_1$, we output bits of either $A_0$
		or $A_1$, switching whenever we notice that the smallest possible
		randomness deficiency ($c$ such that $\forall n\,(K(A_i\upharpoonright
		n)\ge n-c)$) increases.
	
		This constant $c$ depends on $s$ and changes at stage $s+1$ if
		\[
			(\exists n\le s+1)\quad K_{s+1}(A_i\upto n)<n-c_s.
		\]
	
		By Theorem \ref{Merkle}, one of $A_0$, $A_1$ is ML-random, hence switching will occur only finitely often. Thus our output will have an infinite tail that is ML-random, and hence be itself ML-random.\qed 
	\end{proof}
	Inspection of the proof of Theorem \ref{bjoern} shows that we do not need the full power of Turing reductions, but have a truth-table reduction with use $\varphi(n)\le 2n$.

\section{Complex packing dimension and its analogue}\label{sec:intro}

	Let $K(\sigma)$ denote the prefix-free Kolmogorov complexity of a string $\sigma\in 2^{<\omega}$. The prefix of $A$ of length $n$ is denoted $A\upto n$.

	Viewed in terms of complexity \cite{MR2341912,MR1926330}, the Hausdorff and packing dimensions are dual to one another:
	
	\begin{definition}\label{Hausdorff}\label{packing}
		Let $A\in 2^{\omega}$. The effective Hausdorff dimension of $A$ is defined by
		\[
		\dim_{H}(A) = \sup_{m\in\N} \inf_{n\ge m} \dfrac{K(A\upto n)}{n}.
		\]
		The effective packing dimension of $A$ is
		\[
			\dim_{p}(A) = \inf_{m\in\N} \sup_{n\ge m} \dfrac{K(A\upto n)}{n}.
		\]
	\end{definition}
	
	Another notion of dimension was defined in previous work by Kjos-Hanssen and Freer \cite{MR3116543}, which we review here. Let $\mathfrak{D}$ denote the collection of all infinite $\Delta^0_1$ elements of $\Cant$. The complex packing dimension is defined as
	
	\begin{definition}\label{cp}
		$\displaystyle \dim_{cp}(A) = \sup_{N\in \mathfrak{D}} \inf_{n\in N} \dfrac{K(A\upto n)}{n}$.
	\end{definition}
	
	\noindent This leads naturally to a new notion, the dual of complex packing dimension:
	
	\begin{definition}\label{ines}
		$\displaystyle \dim_i(A) = \inf_{N\in\mathfrak{D}} \sup_{n\in N} \dfrac{K(A\upto n)}{n}.$
	\end{definition}
	
	This is the \emph{inescapable} dimension of $A$, so named because
	if $\dim_i(A) = \alpha$, every infinite computable collection of prefixes of $A$ must contain prefixes with relative complexity arbitrarily close to $\alpha$.
	For such a real, there is no (computable) escape from high complexity prefixes. 

	As Freer and Kjos-Hanssen show in \cite{MR3116543}, for any $A\in \Cant$, \[
	0\le \dim_H(A)\le \dim_{cp}(A)\le \dim_p(A)\le 1.\]
	The expected analogous result also holds:
	
	\begin{theorem}\label{iineq}
		For any $A\in \Cant$, $0\le \dim_H(A)\le \dim_{i}(A)\le \dim_p(A)\le 1$.
	\end{theorem}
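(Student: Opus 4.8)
The plan is to reduce everything to one structural observation: every cofinite subset of $\omega$ is an infinite computable set, hence an element of $\mathfrak{D}$. The two outer inequalities $0\le\dim_H(A)$ and $\dim_p(A)\le 1$ require nothing new --- the first is immediate from $K(\sigma)\ge 0$, and the second from the standard bound $K(A\upto n)\le n+2\log n+O(1)$ together with the identity $\inf_m\sup_{n\ge m}(\cdot)=\limsup_n(\cdot)$; alternatively one may simply invoke the chain $\dim_H\le\dim_{cp}\le\dim_p$ of \cite{MR3116543} for those two ends. So the content is entirely in the middle two inequalities, and the argument is the formal dual of the proof of $\dim_H(A)\le\dim_{cp}(A)\le\dim_p(A)$ in \cite{MR3116543}, with the roles of $\sup$ and $\inf$ over $\mathfrak{D}$ interchanged.

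For $\dim_i(A)\le\dim_p(A)$: for each $m\in\N$ the tail $T_m=\{n\in\omega : n\ge m\}$ is an infinite computable set, so $T_m\in\mathfrak{D}$, and $\sup_{n\in T_m}K(A\upto n)/n=\sup_{n\ge m}K(A\upto n)/n$. Since $\dim_i(A)$ is an infimum over all of $\mathfrak{D}$, it is bounded above by $\sup_{n\ge m}K(A\upto n)/n$ for every $m$; taking the infimum over $m$ yields $\dim_i(A)\le\dim_p(A)$. For $\dim_H(A)\le\dim_i(A)$: put $\alpha=\dim_H(A)$ and fix an arbitrary $N\in\mathfrak{D}$. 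Given $\eps>0$, the definition $\dim_H(A)=\sup_m\inf_{n\ge m}K(A\upto n)/n$ produces an $m$ with $K(A\upto n)/n>\alpha-\eps$ for all $n\ge m$. As $N$ is infinite it contains some $n\ge m$, so $\sup_{n\in N}K(A\upto n)/n\ge\alpha-\eps$; letting $\eps\to 0$ gives $\sup_{n\in N}K(A\upto n)/n\ge\alpha$, and since $N\in\mathfrak{D}$ was arbitrary, $\dim_i(A)\ge\alpha$.

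There is essentially no obstacle here beyond keeping the nested $\sup$/$\inf$ quantifiers straight; the only point worth stating explicitly is that $\mathfrak{D}$ is "rich at the top" (for the lower bound $\dim_H\le\dim_i$) and "rich at the bottom" (for the upper bound $\dim_i\le\dim_p$), and in both cases the witnessing sets are just the cofinite tails $T_m$. I would present the proof in essentially the three lines above.
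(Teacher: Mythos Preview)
Your proof is correct and follows essentially the same approach as the paper's: both use that the cofinite tails $[m,\infty)$ lie in $\mathfrak{D}$ to get $\dim_i(A)\le\dim_p(A)$, and both use that any infinite $N\in\mathfrak{D}$ meets every tail $[m,\infty)$ to get $\dim_H(A)\le\dim_i(A)$. The paper compresses the second inequality into a single chain $\inf_{n\ge m}(\cdot)\le\inf_{n\in N\cap[m,\infty)}(\cdot)\le\sup_{n\in N\cap[m,\infty)}(\cdot)\le\sup_{n\in N}(\cdot)$ rather than unpacking it via $\eps$, but the content is identical.
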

	
	\begin{proof} As the sets $[n, \infty)$ are computable subsets of $\N$, $ \dim_{i}(A)\le \dim_p(A)$. For the second inequality, notice that for all $m\in\N$ and all $N\in\Delta^0_1$,
		\begin{eqnarray*}
			\inf_{n\in [m, \infty)} \dfrac{K(A\upto n)}{n}
			&\le& \inf_{n\in N\cap[m, \infty)}\dfrac{K(A\upto n)}{n}\\
			&\le& \sup_{n\in N\cap[m, \infty)}\dfrac{K(A\upto n)}{n}
			\le \sup_{n\in N}\dfrac{K(A\upto n)}{n}.\ \qed
		\end{eqnarray*}
	\end{proof}

	Unexpectedly, this is the best one can do.
	As we will see in the next section, while the Hausdorff dimension of a real is always lower than its packing dimension, any permutation is possible for the complex packing and inescapable dimensions of a real.

\section{Incomparability for inescapable dimension}
	We begin with a proof that the inescapable and complex packing dimensions are incomparable in the following sense: $\dim_{cp}(A)\le \dim_{cp}(B)$ does not imply $\dim_i(A)\le \dim_i (B)$, nor vice versa.
	In fact we show a stronger statement:

	\begin{theorem}\label{incomp}
	There exist $A$ and $B$ in $\Cant$ such that $\dim_{cp}(A)<\dim_{cp}(B)$, but $\dim_{i}(B)<\dim_i(A)$.
	\end{theorem}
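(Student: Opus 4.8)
The plan is to build $A$ and $B$ by directly controlling the complexity of their prefixes along carefully chosen length scales, exploiting the asymmetry between the two quantifier orders in Definitions \ref{cp} and \ref{ines}. The key structural observation is that $\dim_{cp}$ rewards the \emph{existence} of a sparse computable set $N$ on which complexity stays high, while $\dim_i$ is hurt by the existence of a sparse computable set $N$ on which complexity is forced to be visited --- so a real whose high-complexity blocks sit at computably predictable, widely-spaced locations will have large $\dim_{cp}$ but also large $\dim_i$ (no computable set can dodge those blocks), whereas a real whose high-complexity blocks sit at computably \emph{unpredictable} but still reachable locations can have small $\dim_{cp}$ yet still cannot be escaped. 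The trick is to arrange the two reals with opposite profiles.

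First I would fix an increasing computable sequence of thresholds $n_0 < n_1 < \cdots$ (say $n_{k+1}$ much larger than $n_k$, e.g.\ $n_{k+1} = 2^{n_k}$) and construct $B$ so that on the intervals $[n_{2k}, n_{2k+1})$ its prefixes are maximally complex (by using initial segments of a fixed \MLR\ real there) and on the intervals $[n_{2k+1}, n_{2k+2})$ its prefixes are filled with zeros, driving $K(B\upto n)/n$ close to $0$ at the right endpoints. Since the set $\{n_{2k+1} : k\in\N\}$ is computable and infinite, it witnesses $\dim_{cp}(B)$ being small (close to $0$); but every infinite computable $N$ must contain points in infinitely many of the high-complexity intervals $[n_{2k}, n_{2k+1})$ arbitrarily deep --- wait, this needs care, so instead I would make the high-complexity blocks long enough (a constant fraction of $n_{2k+1}$) that any infinite computable $N$ is forced to hit ratios bounded below by some fixed $\beta > 0$, giving $\dim_i(B) \ge \beta > 0$; by tuning the block-length fractions one pushes $\dim_{cp}(B)$ below $\beta$. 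For $A$ I would do the reverse: make $A$ have packing dimension $1$ via spikes of complexity at a \emph{computable} sparse set of locations (so $\dim_{cp}(A) = 1$, since that very set witnesses the sup), while simultaneously arranging the \emph{bulk} of $A$'s prefixes to be low complexity except on those spikes, and then check $\dim_i(A)$: the computable set consisting of the non-spike locations escapes the spikes, so $\dim_i(A)$ can be driven down --- but not below $\dim_i(B)$ unless we are careful, so actually the cleanest route is: arrange $\dim_{cp}(A) = 1$ and $\dim_i(A)$ arbitrarily close to $1$ as well (easy: spikes so dense that no computable set escapes), and arrange $\dim_{cp}(B)$ small but $\dim_i(B)$ strictly between $\dim_i(A)$-complement and... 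Let me restate: I want $\dim_{cp}(A) < \dim_{cp}(B)$ and $\dim_i(B) < \dim_i(A)$, so take $\dim_{cp}(A)$ small, $\dim_i(A)$ large (both achievable by putting high-complexity blocks at \emph{non-computably-locatable} positions that are nonetheless unavoidable), and $\dim_{cp}(B)$ larger than $\dim_{cp}(A)$ but $\dim_i(B)$ smaller than $\dim_i(A)$ (achieved by high-complexity blocks at a fixed \emph{computable} sparse set, which both boosts $\dim_{cp}$ and, because the set is sparse, lets another computable set interleave to keep $\dim_i$ down).

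The concrete construction I would write down: pick computable $0 < \gamma < \delta < 1$. For $B$, let $S = \{s_k\}$ be a fast-growing computable set; set $B\upto s_k$ to agree with a \MLR\ real $R$ on a suffix of length $\lceil \delta s_k \rceil$ and be $0$ elsewhere up to $s_k$, for each $k$ --- then $K(B \upto s_k)/s_k \to \delta$, and one shows $\dim_{cp}(B) = \delta$ (the sup is achieved along $S$; other computable sets can't do better because off the $s_k$-scaled suffixes the complexity drops) while $\dim_i(B) = \delta$ too, hmm --- this is the delicate point. For $A$, I would instead place length-$\lceil \gamma \cdot 2^j \rceil$ blocks of $R$ at positions determined by a \emph{non-computable but unavoidable} schedule, e.g.\ at every length in a set that meets every infinite computable set but is itself not computable (such sets exist, e.g.\ a bi-immune-flavored construction or simply: put a high block starting at \emph{every} length $n$ such that $n \in \emptyset'$ at some stage --- any infinite computable set hits it), forcing $\dim_i(A) \ge \gamma$ while keeping $\dim_{cp}(A) \le \gamma'$ for some $\gamma' < \delta$ by a direct counting argument showing no \emph{computable} $N$ can stay on high blocks.

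The main obstacle I expect is the simultaneous control of both dimensions of a single real: raising $\dim_i$ requires high-complexity prefixes at positions no computable set can avoid, while lowering $\dim_{cp}$ requires that no computable set can \emph{stay} on high-complexity prefixes --- these pull in the same direction (both want "unpredictability") but the precise combinatorics of "unavoidable but not stayable-on" computable sets, and verifying the Kolmogorov complexity estimates survive the zero-padding (lower bounds on $K$ of padded strings, so that the padded regions genuinely depress the ratio), is where the real work lies. I would handle the complexity lower bounds via the standard fact that if $\sigma$ is an initial segment of a \MLR\ real then $K(\sigma) \ge \abs{\sigma} - O(1)$, together with $K(\tau) \le K(\sigma) + 2\log(\abs{\tau} - \abs{\sigma}) + O(1)$ when $\tau$ extends $\sigma$ by zeros, to get matching upper bounds on the padded prefixes; and I would handle the "unavoidable/not-stayable" set-theoretic requirements by an explicit interval construction making the high-complexity blocks of $A$ start at a computably-enumerable but non-computable set of lengths whose complement also meets every infinite computable set.
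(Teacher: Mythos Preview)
Your proposal never converges on a construction, and the one concrete piece you do write down is analyzed with the inequalities reversed. For the alternating-block real $B$ (random on $[n_{2k},n_{2k+1})$, zeros on $[n_{2k+1},n_{2k+2})$ with $n_{k+1}\gg n_k$), the computable set $\{n_{2k+1}:k\in\N\}$ consists of right endpoints of \emph{random} blocks, so $K(B\upto n_{2k+1})/n_{2k+1}\to 1$ and this set witnesses $\dim_{cp}(B)=1$, not $\dim_{cp}(B)\approx 0$ as you claim. Dually, the computable set $\{n_{2k}:k\in\N\}$ of right endpoints of zero blocks has $K(B\upto n_{2k})/n_{2k}\to 0$, so it witnesses $\dim_i(B)=0$; your assertion that ``every infinite computable $N$ must contain points in infinitely many of the high-complexity intervals'' is simply false, since $\{n_{2k}\}$ is such an $N$ and lands only at low-complexity prefixes. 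In short, your $B$ is exactly the paper's $B$, and it already gives $\dim_{cp}(B)=1$ and $\dim_i(B)=0$ --- the opposite of what you computed.

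Once $B$ is correctly analyzed, what remains is to produce $A$ with $\dim_{cp}(A)<1$ and $\dim_i(A)>0$. Your sketch (``high blocks at a c.e.\ but non-computable set of lengths whose complement also meets every infinite computable set'') does not work as stated: placing random blocks at positions in a set $P$ does not make the \emph{high-complexity prefixes} of the resulting real coincide with $P$, because $K(A\upto n)/n$ depends on the entire history up to $n$, not just on whether $n\in P$; so bi-immunity-style properties of $P$ do not transfer to the needed statements about complexity ratios. The paper sidesteps this entirely by taking $A$ to be weakly $2$-generic. Then for every infinite computable $N$ and every $k$, the sets $\{\sigma:\abs{\sigma}\in N\text{ and }(\exists s)\,K_s(\sigma)<\abs{\sigma}/k\}$ and $\{\sigma:\abs{\sigma}\in N\text{ and }(\forall s)\,K_s(\sigma)>\abs{\sigma}(1-1/k)\}$ are dense $\Sigma^0_1$ and dense $\Sigma^0_2$ respectively, and $A$ meets all of them; this immediately yields $\dim_{cp}(A)=0$ and $\dim_i(A)=1$. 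The genericity idea is the missing ingredient in your proposal, and without it (or a genuine substitute) the argument for $A$ has a real gap.
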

  
	Recall that a real $A$ \emph{meets} a set of strings $S$ if there is some $\sigma\in S$ such that $\sigma$ is a prefix of $A$.
	Moreover, $A$ is weakly 2-generic if for each dense $\Sigma^0_2$ set of strings $S$, $A$ meets $S$ \cite{Generic}.

	For a real $A$, let us write $A[m,n]$ to denote the string $A(m) A(m+1) \dots A(n-1)$. For two functions $f(n), g(n)$ we write $f(n)\le^+ g(n)$ to denote $\exists c\forall n\, f(n)\le g(n)+c$.
	We write $f(n) = \mathcal{O}(g(n))$ to denote $\exists M\exists n_0\forall n>n_0\,f(n)\leq Mg(n)$.
	It will also be useful to have the following theorem of Schnorr at our disposal:
	
	\begin{theorem}\label{sch}
	$A$ is \MLR\ iff $n\le^+ K(A\upto n)$.
	\end{theorem}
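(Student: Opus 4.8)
This is the Levin--Schnorr theorem, so the plan is to recall its standard proof. The two implications are treated separately, each converting between \ML\ tests and lower bounds on $K(A\upto n)$ by means of the Kraft inequality $\sum_\sigma 2^{-K(\sigma)}\le 1$ and the Kraft--Chaitin (KC) theorem, which builds a prefix-free machine from a c.e.\ request set of finite weight.

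For the forward implication ($A$ \MLR\ $\Rightarrow$ $n\le^+ K(A\upto n)$) I argue contrapositively. For each $c$, let $U_c$ be the open set generated by all $\sigma$ with $K(\sigma)<|\sigma|-c$; using the approximation $K_s$, the family $\{U_c\}$ is uniformly c.e.\ open, and since $|\sigma|>K(\sigma)+c$ on the generators, $\mu(U_c)\le\sum_{K(\sigma)<|\sigma|-c}2^{-|\sigma|}\le 2^{-c}\sum_\sigma 2^{-K(\sigma)}\le 2^{-c}$, so $\{U_c\}$ is a \ML\ test. If $n\not\le^+ K(A\upto n)$, then for every $c$ there is $n$ with $K(A\upto n)<n-c$, so $A\upto n$ generates $U_c$ and $A\in U_c$; hence $A\in\bigcap_c U_c$, contradicting randomness.

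For the reverse implication I again argue contrapositively: assume $A$ is not \MLR\ and fix a \ML\ test $\{V_m\}$ with $A\in\bigcap_m V_m$, writing $V_m=\bigcup_{\sigma\in G_m}[\sigma]$ for a uniformly c.e.\ prefix-free set $G_m$ (so $\mu(V_m)=\sum_{\sigma\in G_m}2^{-|\sigma|}\le 2^{-m}$ and hence $|\sigma|\ge m$ for $\sigma\in G_m$). Form a single KC request set from the requests $(|\sigma|-\lfloor m/3\rfloor,\sigma)$ over all $m$ and $\sigma\in G_m$; its weight is $\sum_m 2^{\lfloor m/3\rfloor}\mu(V_m)\le\sum_m 2^{\lfloor m/3\rfloor-m}<\infty$, which can be brought below $1$ by discarding the finitely many small $m$ (or absorbed into the machine's coding constant). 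The KC theorem then gives $K(\sigma)\le|\sigma|-\lfloor m/3\rfloor+\mathcal{O}(1)$ for all $\sigma\in G_m$. Given $c$, pick $m$ with $\lfloor m/3\rfloor-\mathcal{O}(1)>c$; since $A\in V_m$, it extends some $\sigma\in G_m$, and with $n=|\sigma|$ we obtain $K(A\upto n)=K(\sigma)<n-c$. As $c$ was arbitrary, $n\not\le^+ K(A\upto n)$, completing the equivalence.

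The only delicate point I anticipate is the constant-chasing in the reverse direction: one must sink deep enough into the test (the $\lfloor m/3\rfloor$ discount) to defeat both the coding constant of the KC machine and the need for a summable request weight, and one must check that ``$A$ extends a generator of $V_m$'' genuinely exhibits a prefix $A\upto n$ of complexity below $n-c$. The forward direction and the remaining bookkeeping are routine.
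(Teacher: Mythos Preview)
The paper does not prove this theorem; it is stated as a known result attributed to Schnorr and used as a black box in the subsequent complexity calculations. Your argument is the standard Levin--Schnorr proof and is correct: the forward direction builds a \ML\ test from the sets $U_c=\{\,X:(\exists n)\,K(X\upto n)<n-c\,\}$ with the measure bound coming from Kraft's inequality, and the reverse direction applies the KC theorem to requests drawn from the generators of a given \ML\ test. Two minor remarks: your claim that $|\sigma|\ge m$ for $\sigma\in G_m$ is justified (any shorter $\sigma$ would by itself force $\mu(V_m)>2^{-m}$), so the request lengths $|\sigma|-\lfloor m/3\rfloor$ are nonnegative as needed; and the choice of $\lfloor m/3\rfloor$ is unusual but perfectly adequate, since any discount growing unboundedly in $m$ while keeping $\sum_m 2^{\text{discount}(m)-m}<\infty$ would do.
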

	
	Finally, for a real $A$ and $n\in\omega$ we use the indicator function $1_A$ defined by \[1_A(n)=\begin{cases} 1& \text{if } n\in A,\\ 0&\text{otherwise.}\end{cases}\]

	\begin{proof}[of Theorem \ref{incomp}]
		Let $A$ be a weakly 2-generic real, and let $R$ be a \MLR\ real.
		Let $s_k=2^{k^2}$, $k_n = \max\{k\mid s_k\le n\}$, $C = (01)^\omega$. Define
		
		$$B(n) = R\left(n-s_{k_n}\right)\cdot 1_{C}(k_n).$$
		
		Unpacking this slightly, this is
		\[
			B(n) = \begin{cases}
					R\left(n - s_k\right) & \text{if }\ s_k\le n < s_{k+1} \text{ for some even }k,\\
					0 & \text{otherwise.}
					\end{cases}
		\]
		 In this proof, let us say that an $R$-segment is a string of the form \linebreak $B\upto [s_{2m},s_{2m+1})$ for some $m$, and say that a 0-segment is a string of the form \linebreak $B\upto [s_{2m+1},s_{2m+2})$ for some $m$. These are named so that a 0-segment consists of zeros, and an $R$-segment consists of random bits.
		Notice that by construction, each such segment is much longer than the combined length of all previous segments.
		This guarantees certain complexity bounds at the segments' right endpoints. For instance, $B$ has high complexity at the end of $R$-segments:
		for any even $k\in\N$,
		
		\begin{eqnarray*}
			s_{k+1} - s_k&\le^+&K\left(B\left[s_k, s_{k+1}\right]\right)\\
			&\le^+& K(B\upto s_k) + K(B\upto s_{k+1}) \le^+ 2 s_k + K(B\upto s_{k+1}).
		\end{eqnarray*}
		
		The first inequality holds by Theorem \ref{sch} because $B\left[s_k, s_{k+1}\right] = R\upto(s_{k+1} - s_k)$.
		The second (rather weak) inequality holds because from descriptions of $B\upto s_k$ and $B\upto s_{k+1}$ we can recover $B[s_k,s_{k+1}]$.
		Finally, $K(\sigma)\le^+ 2\abs{\sigma}$ is a property of prefix-free Kolmogorov complexity $K$. Combining and dividing by $s_{k+1}$ gives
		\begin{align}\label{using}
			s_{k+1} - 3s_k&\le^+K(B\upto s_{k+1})\nonumber \\
			1-3\cdot 2^{-(2k+1)}&\le\dfrac{K(B\upto s_{k+1})}{s_{k+1}}+ \mathcal{O}\left(2^{-(k+1)^2}\right) \quad\text{as $k\to\infty$.}
		\end{align}
		Dually, the right endpoints of $0$-segments have low complexity: for odd $k\in\N$,
		\begin{align*}
			K(B\upto s_{k+1})&\le^+ K(B\upto s_k) + K(B[s_k, s_{k+1}]) \le^+ 2s_k + 2\log(s_{k+1} - s_k).
		\end{align*}
		The first inequality is again the weak bound that $B\upto s_{k+1}$ can be recovered from descriptions of $B\upto s_k$ and $B[s_k, s_{k+1}]$.
		For the second, we apply the $2\abs{\sigma}$ prefix-free complexity bound to $B\upto s_k$, but also notice that since $B[s_k, s_{k+1}] = 0^{s_{k+1}-s_k}$, it can be recovered effectively from a code for its length.
		Combining and dividing by $s_{k+1}$, we have
		\begin{align}\label{using2}
			K(B\upto s_{k+1})&\le^+ 2s_k + 2(k+1)^2 \nonumber\\
			\dfrac{K(B\upto s_{k+1})}{s_{k+1}}&\le 2^{-(2k+1)} + \mathcal{O}\left(2^{-(k+1)^2}\right) \quad\text{as $k\to\infty$.}
		\end{align}
		Now we can examine the dimensions of $A$ and $B$. \\ \\
		\textbf{Claim 1:} $\dim_{cp}(B) = 1$.\\
		Let $R_n$ be the set of right endpoints of $R$-segments of $B$, except for the first $n$ of them --- that is, $R_n = \{s_{2k+1}\}_{k=n}^\infty$.
		Then the collection of these $R_n$ is a subfamily of $\mathfrak{D}$, so that a supremum over $\mathfrak{D}$ will be at least the supremum over this family.
		\begin{eqnarray*}
			\sup_{N\in\mathfrak{D}} \inf_{n\in N} \dfrac{K(B\upto n)}{n}&\ge& \sup_{n\in\N} \inf_{s\in R_n} \dfrac{K(B\upto s)}{s}\\
			\geq \sup_{n\in\N} \inf_{s\in R_n} 1-3\cdot 2^{-(2s+1)}
						 &=& \sup_{m\in\N} 1-3\cdot 2^{-(2m+1)}=1
		\end{eqnarray*}
		by (\ref{using}).

		\noindent \textbf{Claim 2:} $\dim_{i}(B) = 0$.\\
		Let $Z_n$ be the set of right endpoints of $0$-segments of $B$, except for the first $n$ of them: $Z_n = \{s_{2k}\}_{k=n}^\infty$. Similarly to Claim 1, we obtain
		\begin{eqnarray*}
			\inf_{N\in\mathfrak{D}} \sup_{n\in N} \dfrac{K(B\upto n)}{n}&\leq& \inf_{n\in\N} \sup_{s\in Z_n} \dfrac{K(B\upto s)}{s}\\
			\leq \inf_{n\in\N} \sup_{s\in Z_n} 2^{-(2s+1)}
						 &=& \inf_{m\in\N}2^{-(2m+1)}=0
		\end{eqnarray*}
		by (\ref{using2}).

		\noindent \textbf{Claim 3:} $\dim_{cp}(A) = 0$.\\
		For each natural $k$ and $N$ in $\mathfrak{D}$, the following sets are dense $\Sigma^0_1$:
		\[
			\left\{\sigma\in2^{<\omega} : \abs{\sigma}\in N \text{ and } (\exists s)\ K_s(\sigma)<\abs{\sigma}/k]\right\}.
		\]
		As $A$ is weakly 2-generic, it meets all of them. Hence
		\[
			\sup_{N\in\mathfrak{D}}\inf_{m\in N}\dfrac{K(\sigma\upto m)}{m}=0.
		\]
		\textbf{Claim 4:} $\dim_{i}(A) = 1$.\\
		For each natural $k$ and $N$ in $\mathfrak{D}$,
		\[
			\left\{\sigma\in2^{<\omega} : \abs{\sigma}\in N \text{ and } (\forall s)\ K_s(\sigma)>\abs{\sigma}(1-1/k)\right\}
		\]
		is a dense $\Sigma^0_2$ set. As $A$ is weakly 2-generic, it meets all of these sets. Hence \[
		\inf_{N\in\mathfrak{D}}\sup_{m\in N}\dfrac{K(A\upto m)}{m}=1.\ \qed
		\]
	\end{proof}

	We say that $A$ is finite-to-one reducible to $B$ if there is a total computable function $f:\omega\to\omega$ such that the preimage of each $n\in\omega$ is finite and for all $n$, $n\in A\iff f(n)\in B$.

	\begin{definition}
		Let $\B$ be a class of infinite sets downward closed under finite-to-one reducibility. For $A\in \Cant$, we define
		\[
			 \dim_{is\B}(A) = \inf_{N\in\B} \sup_{n\in N} \dfrac{K(A\upto n)}{n}\quad\text{and}\quad
			 \dim_{si\B}(A) = \sup_{N\in\B} \inf_{n\in N} \dfrac{K(A\upto n)}{n}.
		\]
	\end{definition}

	Notice that for any oracle $X$, the classes of infinite sets that are $\Delta^0_n(X), \Sigma^0_n(X)$ or $\Pi^0_n(X)$ are downward closed under finite-to-one reducibility, and so give rise to notions of dimension of this form.
	We will label these $\mathfrak{D}_n(X)$, $\mathfrak{S}_n(X)$, and $\mathfrak{P}_n(X)$ respectively, leaving off $X$ when $X$ is computable. Interestingly, for fixed $n$, the first two give the same notion of dimension.
	
	\begin{theorem} For all $A\in\Cant$ and $n\in\N$, $\dim_{is\Sigma^0_n}(A) = \dim_{is\Delta^0_n}(A)$.
	\end{theorem}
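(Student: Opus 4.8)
The plan is to prove the two inequalities separately; one is immediate, and the other rests on a classical absorption fact. Since every infinite $\Delta^0_n$ set is in particular an infinite $\Sigma^0_n$ set, we have $\mathfrak{D}_n\subseteq\mathfrak{S}_n$, and an infimum taken over a larger family can only decrease, so $\dim_{is\Sigma^0_n}(A)\le\dim_{is\Delta^0_n}(A)$ with no work at all.

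For the reverse inequality, the key observation I would isolate as a lemma is that every infinite $\Sigma^0_n$ set $N$ has an infinite $\Delta^0_n$ subset $M$. This is simply the relativization to $\emptyset^{(n-1)}$ of the standard fact that an infinite c.e. set has an infinite computable subset: by Post's theorem $N$ is c.e. relative to $\emptyset^{(n-1)}$, so fixing an $\emptyset^{(n-1)}$-computable enumeration of $N$ and passing to the subsequence of strictly increasing ``record'' elements yields a set $M\subseteq N$ that is infinite and, being strictly increasing, computable from $\emptyset^{(n-1)}$ — hence $\Delta^0_n$ (for $n=1$ this is the unrelativized statement, since $\emptyset^{(0)}=\emptyset$).

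Granting the lemma, the rest is monotonicity of suprema. Fix an arbitrary infinite $N\in\mathfrak{S}_n$ and let $M\subseteq N$ be an infinite $\Delta^0_n$ subset. Because $M\subseteq N$, we have $\sup_{m\in M} K(A\upto m)/m \le \sup_{n\in N} K(A\upto n)/n$, and therefore $\dim_{is\Delta^0_n}(A)=\inf_{M'\in\mathfrak{D}_n}\sup_{m\in M'}K(A\upto m)/m \le \sup_{n\in N}K(A\upto n)/n$. Taking the infimum over all infinite $N\in\mathfrak{S}_n$ on the right gives $\dim_{is\Delta^0_n}(A)\le\dim_{is\Sigma^0_n}(A)$, and combining with the first paragraph yields the claimed equality.

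The only point needing any care is the absorption lemma, and within it the identification of $\Delta^0_n$ with ``Turing-below $\emptyset^{(n-1)}$'' (so that the record subsequence counts as $\Delta^0_n$), which is a routine application of the arithmetical hierarchy and trivial when $n=1$; everything else is bookkeeping with $\inf$ and $\sup$. It is worth remarking in passing that the analogue for $\Pi^0_n$ genuinely fails — an infinite $\Pi^0_n$ set can be $\Delta^0_n$-immune — which is exactly why the theorem is stated for $\Sigma$ versus $\Delta$ and not for all three pointclasses.
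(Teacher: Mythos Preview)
Your proof is correct and follows essentially the same approach as the paper: both directions rest on the containment $\mathfrak{D}_n\subseteq\mathfrak{S}_n$ for the easy inequality and on the fact that every infinite $\Sigma^0_n$ set contains an infinite $\Delta^0_n$ subset for the other, combined with monotonicity of $\sup$ under passage to subsets. The paper states only the $n=1$ case explicitly and leaves the relativization implicit, whereas you spell out the Post-theorem argument for general $n$; your closing remark about the failure for $\Pi^0_n$ also anticipates the paper's subsequent discussion.
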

	
	\begin{proof}
		We prove the unrelativized version of the statement, $n=1$.\\
		\textbf{[$\le$]} As $\Delta^0_1\subseteq \Sigma^0_1$, this direction is trivial.\\
		\textbf{[$\ge$]} As every infinite $\Sigma^0_1$ set $N$ contains an infinite $\Delta^0_1$ set $N'$, we have
		\begin{align*}
			\dim_{is\Sigma^0_1}(A)  = \inf_{N\in \mathfrak{S}_1} \sup_{n\in N} \dfrac{K(A\upto n)}{n}
			&\ge \inf_{N\in \mathfrak{S}_1} \sup_{n\in N'} \dfrac{K(A\upto n)}{n}\\
			&\ge \inf_{N\in \mathfrak{D}_1} \sup_{n\in N} \dfrac{K(A\upto n)}{n}
			= \dim_{is\Delta^0_1}(A).\qed
		\end{align*}
	\end{proof}
	
By a similar analysis, the analogous result for $si$ dimensions is also true.
	\begin{theorem} For all $A\in\Cant$ and $n\in\N$, $\dim_{si\Sigma^0_n}(A) = \dim_{si\Delta^0_n}(A)$.\label{sigeqdelsi}\end{theorem}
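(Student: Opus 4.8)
The plan is to run the same argument as in the preceding theorem with the roles of $\inf$ and $\sup$ interchanged, and again it suffices to treat the unrelativized case $n=1$: the general case is obtained verbatim by carrying the oracle $\emptyset^{(n-1)}$ through every computation, since infinite $\Sigma^0_n$ sets are exactly the infinite sets c.e.\ in $\emptyset^{(n-1)}$ and infinite $\Delta^0_n$ sets are exactly the infinite sets computable from $\emptyset^{(n-1)}$.

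One inequality is immediate. Since $\mathfrak{D}_1\subseteq\mathfrak{S}_1$ and $\dim_{si\mathfrak{B}}$ is defined as a supremum over $N\in\mathfrak{B}$, enlarging the index class can only increase the value; hence $\dim_{si\Sigma^0_1}(A)\ge\dim_{si\Delta^0_1}(A)$.

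For the reverse inequality I would again invoke the classical fact that every infinite $\Sigma^0_1$ set $N$ contains an infinite $\Delta^0_1$ (indeed computable) subset $N'$: list $N$ as it is enumerated and keep an element exactly when it exceeds all previously kept elements, obtaining a strictly increasing, hence computable, infinite subsequence. Because $N'\subseteq N$, restricting to $N'$ can only raise an infimum, so
\[
\inf_{n\in N}\dfrac{K(A\upto n)}{n}\ \le\ \inf_{n\in N'}\dfrac{K(A\upto n)}{n}\ \le\ \sup_{M\in\mathfrak{D}_1}\inf_{n\in M}\dfrac{K(A\upto n)}{n}\ =\ \dim_{si\Delta^0_1}(A).
\]
Taking the supremum over all infinite $\Sigma^0_1$ sets $N$ on the left yields $\dim_{si\Sigma^0_1}(A)\le\dim_{si\Delta^0_1}(A)$, which combined with the previous paragraph proves the theorem. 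There is essentially no obstacle here; the only point deserving a word of care is the passage to general $n$, which simply relativizes the statement ``an infinite c.e.\ set has an infinite computable subset'' to $\emptyset^{(n-1)}$.
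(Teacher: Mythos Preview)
Your proof is correct and follows exactly the approach the paper indicates: the paper itself does not spell out this proof but merely states that it is obtained ``by a similar analysis'' to the preceding $is$-dimension theorem, and your argument---using $\mathfrak{D}_1\subseteq\mathfrak{S}_1$ for one inequality and the fact that every infinite $\Sigma^0_1$ set contains an infinite $\Delta^0_1$ subset for the other---is precisely that similar analysis with $\inf$ and $\sup$ interchanged.
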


	What about the $\Pi^0_n$ dimensions? Unlike the $\Sigma^0_1$ case, these do not collapse to any other dimension. Two lemmas will be useful in proving this.
	The first (which was implicit in Claims 1 and 2 of Theorem \ref{iineq}) will allow us to show that an $si$-dimension of a real is high by demonstrating a sequence that witnesses this.
	%\footnote{The converse is not true - while a witnessing sequence guarantees high dimension, high dimension does not guarantee the existence of a witnessing sequence.}
	The second is a generalization of the segment technique, forcing a dimension to be $0$ by alternating $0$- and $R$-segments in a more intricate way, according to the prescriptions of a certain real.
	The constructions below proceed by selecting a real that will guarantee that one dimension is 0 while leaving room to find a witnessing sequence for another.

	\begin{lemma}[Sequence Lemma]\label{seqlem}
Let $\B$ be a class of infinite sets downward closed under finite-to-one reducibility, and let $N= \{n_k\mid k\in\omega\}\in\B$.
	\begin{enumerate}
		\item \label{one} If $\displaystyle  \lim_{k\ra\infty}\dfrac{K(X\upto n_k)}{n_k}=1$, then $\dim_{si\B}(X) = 1$.
		\item\label{two} If $\displaystyle\lim_{k\ra\infty}\dfrac{K(X\upto n_k)}{n_k}=0$, then $\dim_{is\B}(X)=0$.
	\end{enumerate}
	\end{lemma}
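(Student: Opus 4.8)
The plan is to derive both parts from a single structural fact: because $\B$ is downward closed under finite-to-one reducibility, if $N=\{n_k\mid k\in\omega\}\in\B$ then every tail $N_j=\{n_k\mid k\ge j\}$ also lies in $\B$. I would verify this by exhibiting an explicit reduction: assuming $N$ is coinfinite (which it is in every application), pick some $p\notin N$ and set $f(m)=m$ for $m\notin\{n_0,\dots,n_{j-1}\}$ and $f(m)=p$ otherwise. Then $f$ is total computable, each preimage is finite, and $m\in N_j\iff f(m)\in N$, so $N_j\le_{\mathrm{fto}}N$ and hence $N_j\in\B$. (For the standard classes $\mathfrak D_n,\mathfrak S_n,\mathfrak P_n$ this is immediate anyway, since they are closed under finite modification.)

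Granting this, part (\ref{one}) reduces to the chain
\[
\dim_{si\B}(X)=\sup_{M\in\B}\inf_{n\in M}\frac{K(X\upto n)}{n}\ \ge\ \sup_{j\in\omega}\inf_{k\ge j}\frac{K(X\upto n_k)}{n_k}\ =\ \liminf_{k\to\infty}\frac{K(X\upto n_k)}{n_k}\ =\ 1,
\]
where the inequality uses $N_j\in\B$ for all $j$ and the final equality is the hypothesis $\lim_k K(X\upto n_k)/n_k=1$. Together with the trivial upper bound $\dim_{si\B}(X)\le 1$ — for any infinite $M\in\B$ and any $\eps>0$ one may choose $n\in M$ large enough that $K(X\upto n)\le n+K(n)+\mathcal O(1)<(1+\eps)n$, so $\inf_{n\in M}K(X\upto n)/n\le 1$ — this yields $\dim_{si\B}(X)=1$.

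Part (\ref{two}) is the dual computation: again using $N_j\in\B$,
\[
\dim_{is\B}(X)=\inf_{M\in\B}\sup_{n\in M}\frac{K(X\upto n)}{n}\ \le\ \inf_{j\in\omega}\sup_{k\ge j}\frac{K(X\upto n_k)}{n_k}\ =\ \limsup_{k\to\infty}\frac{K(X\upto n_k)}{n_k}\ =\ 0,
\]
and since this quantity is manifestly nonnegative, $\dim_{is\B}(X)=0$.

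The only step carrying any content is the closure fact in the first paragraph; everything afterward is routine manipulation of nested infima and suprema. Accordingly I expect the sole (minor) obstacle to be the bookkeeping for that finite-to-one reduction, and in particular the observation that it is formulated for coinfinite $N$ — which is all that is ever invoked in this paper, so no separate treatment of $N=\omega$ is required for the applications.
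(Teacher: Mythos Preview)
Your proof is correct and follows essentially the same approach as the paper: both arguments pass to the tails $N_j=\{n_k\mid k\ge j\}$, invoke closure of $\B$ under finite-to-one reducibility to place each $N_j$ in $\B$, and then read off the conclusion from the definition of the limit. You are somewhat more explicit than the paper---supplying the actual reduction witnessing $N_j\in\B$, noting the coinfinite caveat, and recording the trivial upper bound $\dim_{si\B}(X)\le 1$---but the underlying idea is identical.
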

	\begin{proof}
		We prove (\ref{one}); (\ref{two}) is similar.

		Form the infinite family of sets $\{N_m\}$ defined by $N_m = \{n_k\mid k\geq m\}$. From the definition of the limit, for any $\eps>0$ there  is an $l$ such that $$\inf_{N_l}\dfrac{K(X\upto n_k)}{n_k}> 1-\eps.$$ As $\eps$ was arbitrary, $$\sup_m \inf_{N_m}\dfrac{K(X\upto n_m)}{n_m}= 1.$$
		Thus as $\B$ is closed under finite-to-one reduction, the $N_m$ form a subfamily of $\B$, so that $\sup_{N\in \mathfrak B} \inf_{n\in N} K(X\upto n)/n= 1$.\qed
	\end{proof}

	Recall that an infinite real $A$ is said to be \emph{immune} to a class $\B$ if there is no infinite member $B\in\B$ such that $B\subseteq A$ as sets, or \emph{co-immune} to a class $\B$ if its complement is immune to $\B$. We will sometimes refer to these properties as $\B$-immunity or $\B$-co-immunity, respectively.

	\begin{lemma}[Double Segment Lemma]\label{dslem} Let $X_0\in \Cant$ be such that $X_0$ is  $\B$-immune for a class $\B$  of infinite sets downward closed under finite-to-one reducibility. Set $X = X_0\oplus X_0$. Let $s_k=2^{k^2}$, and $k_n = \max\{\text{odd }k\mid s_k\le n\}$. Let $A$ be an arbitrary real and let $R$ be \MLR. \begin{enumerate}
		\item\label{dslem-one} If $B = A\left(n-s_{k_n}\right)\cdot 1_{X}(k_n)$, then $\dim_{si\B}(B) = 0$.
		\item\label{dslem-two} If $B = R\left(n-s_{k_n}\right)\cdot 1_{\overline{X}}(k_n)$, then $\dim_{is\B}(B) = 1$.
	\end{enumerate}
	\end{lemma}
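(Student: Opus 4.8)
The plan is to unwind the definitions and reduce each part to a claim about an arbitrary member of $\B$ (every such member is infinite, by hypothesis).

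For (\ref{dslem-one}): since $\dim_{si\B}(B)=\sup_{N\in\B}\inf_{n\in N}K(B\upto n)/n$ and each inner infimum is $\ge 0$, the assertion $\dim_{si\B}(B)=0$ is equivalent to saying that every $N\in\B$ has $\inf_{n\in N}K(B\upto n)/n=0$. This should follow from the block structure of $B$: the bits of $B$ on the interval $[s_{k},s_{k+2})$ indexed by an odd $k=2j+1$ are ($A$-shifted) exactly when $j\in X_0$ and are all $0$ when $j\notin X_0$ (here we use $X=X_0\oplus X_0$, so that $1_X(k)$ depends only on $\lfloor k/2\rfloor$). The driving estimate is elementary: if $B\upto n$ ends in $n-p$ zeros then $K(B\upto n)\le^+ 2p+2\log n$, so $K(B\upto n)/n\le^+ 2p/n+o(1)$. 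Because $s_k=2^{k^2}$ grows so fast ($s_{k}/s_{k+2}=2^{-(4k+4)}\to 0$), a position $n$ that lies at or near the top of a zero block, deep inside one, or straddling two consecutive zero blocks has $K(B\upto n)/n\to 0$. So it remains to locate such positions inside an arbitrary $N\in\B$.

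To do this I would first note that $\overline{X_0}$ meets every $N\in\B$ in an infinite set: otherwise $N$ with a finite set removed — still a member of $\B$, via the finite-to-one reduction fixing all but finitely many points — would be an infinite member of $\B$ contained in $X_0$, contradicting $\B$-immunity. Then, given $N\in\B$, I would apply closure of $\B$ under finite-to-one reducibility to the injective computable maps $h$ sending an index $j$ to a designated position inside block $2j+1$ (its top endpoint, that endpoint shifted by a fixed constant, or a position deep inside the block), obtaining $h^{-1}(N)\in\B$; whenever this is infinite, the observation about $\overline{X_0}$ — combined if necessary with a fixed shift of the index, used to refer to the preceding block — forces infinitely many of the relevant indices to lie outside $X_0$, and each of these yields a point of $N$ with $K(B\upto n)/n\to 0$. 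The bookkeeping is that, by the fast growth of $s_k$, every position except those in two thin ``collars'' of its block is automatically deep; a position in the start-collar sits at the top of the previous block; and a position in the middle-collar is handled using two consecutive zero blocks, which is where the doubled structure coming from $X_0\oplus X_0$ is convenient. This gives $\dim_{si\B}(B)=0$.

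Part (\ref{dslem-two}) is \emph{dual}. Now $\overline X=\overline{X_0}\oplus\overline{X_0}$, so block $2j+1$ carries bits of the \MLR\ real $R$ when $j\notin X_0$ and is zero when $j\in X_0$, and we must show $\dim_{is\B}(B)=\inf_{N\in\B}\sup_{n\in N}K(B\upto n)/n=1$; since this quantity is always $\le 1$, it suffices to prove $\sup_{n\in N}K(B\upto n)/n=1$ for every $N\in\B$. By Schnorr's theorem (Theorem~\ref{sch}), when $B[s_{2j+1},n)=R\upto(n-s_{2j+1})$ one recovers $R\upto(n-s_{2j+1})$ from $B\upto n$ and $s_{2j+1}$, so $K(B\upto n)\ge^+(n-s_{2j+1})-2\log n$ and $K(B\upto n)/n\ge^+ 1-s_{2j+1}/n-o(1)\to 1$ once $s_{2j+1}/n\to 0$. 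The same location argument as above — finite-to-one closure of $\B$ applied to the index-to-position maps and a fixed shift of the index, together with $\B$-immunity of $X_0$ forcing infinitely many relevant indices outside $X_0$ — produces, inside any $N\in\B$, infinitely many positions at the top of or deep inside a random block, hence with $K(B\upto n)/n\to 1$. Therefore $\dim_{is\B}(B)=1$.

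The step I expect to be the main obstacle is this location argument in both parts: guaranteeing that an arbitrary, possibly adversarially placed, $N\in\B$ contains positions lying at the top of or deep inside a block whose index avoids $X_0$. This forces one to mesh three ingredients that do not obviously cooperate — $\B$-immunity of $X_0$ (equivalently, $\overline{X_0}$ meeting every infinite member of $\B$ infinitely often), closure of $\B$ under finite-to-one reducibility applied to the block-indexing maps and their constant shifts, and the growth $s_k=2^{k^2}$, which makes almost every position in a block ``deep'' and leaves only the thin collars to be absorbed by the doubled structure of $X_0\oplus X_0$. Ruling out an $N$ that clings to the collars of the ``wrong'' blocks is the delicate point.
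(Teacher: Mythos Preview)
Your complexity estimates and your reading of the roles of $\B$-immunity and of the doubling $X=X_0\oplus X_0$ are correct, and you have correctly singled out the location argument as the crux. The gap is exactly there, and it is genuine.

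Your maps $h$ run from block indices to positions: $h(j)$ is a single designated point inside block $2j+1$ (a top endpoint, a shift of it, a ``deep'' point). You then form $h^{-1}(N)$ and hope it is infinite. But each block contains arbitrarily many positions while your scheme designates only finitely many per block; an adversarial $N\in\B$ can pick exactly one undesignated element from each block and nothing else, so that $h^{-1}(N)$ is empty for every $h$ under consideration and the argument never starts. Closure of $\B$ under finite-to-one reducibility is no help when the preimage is simply too small --- and there is no finite family of such $h$'s that covers every position in every block.

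The paper runs the reduction in the opposite direction. It uses the total, computable, finite-to-one map $n\mapsto\lfloor k_n/2\rfloor$ from positions to block indices. If all but finitely many $n\in N$ had $\lfloor k_n/2\rfloor\in X_0$, then after deleting those finitely many exceptions one obtains $N'\in\B$ whose image $\{\lfloor k_n/2\rfloor:n\in N'\}$ is an infinite member of $\B$ contained in $X_0$, contradicting immunity. Hence infinitely many $n\in N$ lie in a $0$-segment; the doubling is arranged so that each such $n$ in fact lies in a $0$-segment \emph{immediately following} another $0$-segment, which lets one cut at $s_{k_n-1}$ rather than $s_{k_n}$: then $B[s_{k_n-1},n]$ is all zeros and $s_{k_n-1}/n\le s_{k_n-1}/s_{k_n}\to 0$ regardless of where $n$ sits inside its segment. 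This single estimate replaces your entire collar case analysis.
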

		Again, we will give a detailed proof of only the $\dim_{si\B}$ result (though the necessary changes for $\dim_{is\B}$ are detailed below). Unpacking the definition of $B$,
		\[
			B(n) = \begin{cases}
					A\left(n - s_k\right) & \text{if $k_n\in X$}\\
					0 & \text{otherwise.}
					\end{cases}
		\] $B$ is here built out of segments of the form $B\left[s_{k_n}, s_{k_n+2}\right]$ for odd $k$. Here a segment is a $0$-segment if $k_n\not\in X$, or an $A$-segment if $k_n\in X$, which by definition is a prefix of $A$. These segments are now placed in a more intricate order according to $X$, with a value $n$ being contained in a $0$-segment if $X(k_n) = 0$, and in an $A$-segment if $X(k_n) = 1$. With some care, this will allow us to leverage the $\B$-immunity of $X_0$ to perform the desired complexity calculations.
			
        Specifically, we want to show that for any $N\in\B$, $\inf_NK(B\upto n)/n=0$. It is tempting to place the segments according to $X_0$ and invoke its $\B$-immunity to show that for any $N\in\B$, there are infinitely many $n\in N$ such that $n$ is in a $0$-segment, and argue that complexity will be low there. The problem is that we have no control over \emph{where} in the $0$-segment $n$ falls. Consider in this case the start of any segment following an $A$-segment: $n=s_{k_n}$ for $k_{n}-1\in X_0$ and $k_n\in X_0$. We can break $A$ and $B$ into sections to compute \begin{align*}K(A\upto n) &\leq^+ K(A\upto(n-s_{k_n-1}))+K(A[n-s_{k_n-1}, n]) \\
        &=K(B[s_{k_n-1},n])+K(A[n-s_{k_n-1}, n])\tag{$k_n-1\in X_0$}\\
        &\leq^+K(B\upto n) + K(B\upto s_{k_n-1})+K(A[n-s_{k_n-1}, n])\\
         K(A\upto n)&\leq^+K(B\upto n)+4s_{k_n-1}\tag{$K(\sigma)\leq^+2|\sigma|$}
        \end{align*} Even if $n$ is the start of a $0$-segment, if $K(A\upto n)$ is high, $K(B\upto n)$ may not be as low as needed for the proof. Our definition of $X$ avoids this problem:
    	\begin{proof}[of Theorem \ref{dslem}]
			Suppose for the sake of contradiction that for some $N\in\B$, there are only finitely many $n\in N$ with $k_n, k_n-1\in \overline{X}$, i.e., that are in a $0$-segment immediately following another $0$-segment.
			Removing these finitely many counterexamples we are left with a set $N'\in\B$ such that for all $n\in N'$, $\lnot[(k_n\not\in X) \land (k_n-1\not\in X)].$
			As $k_n$ is odd, the definition of $X$ gives that $\lfloor k_n/2\rfloor\in X_0$.
			By a finite-to-one reduction from $N'$, the infinite set $\{\lfloor k_n/2\rfloor\}_{n\in N'}$ is a member of $\B$ and is contained in $X_0$ - but $\overline{X_0}$ is immune to such sets.
			
			Instead it must be the case that there are infinitely many $n\in N$ in a $0$-segment following a $0$-segment, where the complexity is \begin{align*}
			K(B\upto n) &\leq^+ K\left(B\upto s_{n_{k-1}}\right)+ K\left(B\left[s_{n_{k-1}},n\right]\right)\\
			&\le^+ 2\cdot s_{n_{k-1}} + 2\log\left(n-s_{n_{k-1}}\right).
			\end{align*}
			Here the second inequality follows from the usual $2\abs{\sigma}$ bound and the fact that $B \left[s_{n_{k-1}},n\right]$ contains only $0$s. As $2^{k_n^2}\le n$, we can divide by $n$ to get
			\begin{align*}
			\dfrac{K(B\upto n)}{n}\le^+ \dfrac{2^{k_n^2-2k_n}}{2^{k_n^2}} + \dfrac{2\log(n)}{n}= 2^{-2k_n}+\dfrac{2\log(n)}{n}.
			\end{align*}
			As there are infinitely many of these $n$, it must be that $\inf_{n\in N} K(B\upto n)/n = 0$.
			This holds for every real $N$ with property $\B$, so 
			taking a supremum gives the result.
			
			The $\dim_{is\B}$ version concerns reals $B$ constructed in a slightly different way. Here, the same argument now shows there are infinitely many $n\in N$ in an $R$-segment following an $R$-segment. At these locations, the complexity $K(B\upto n)$ can be shown to be high enough that $\sup_N K(B\upto n)/n = 1$, as desired. 
			\qed
	\end{proof}

	With these lemmas in hand, we are ready to prove the following theorem:
	
	\begin{theorem}\label{pidelsepsi}
		For all natural $n$ there is a set $A$ with $\dim_{si\Pi^0_n}(A) = 1$ and $\dim_{si\Delta^0_n}(A)=0$.
	\end{theorem}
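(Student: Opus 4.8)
The plan is to build the witness by the double-segment method of Lemma~\ref{dslem}, which pins $\dim_{si\Delta^0_n}$ at $0$, while choosing the underlying data so that the right endpoints of the random segments form a $\Pi^0_n$ set along which the complexity ratio tends to $1$; part~(1) of Lemma~\ref{seqlem} then forces $\dim_{si\Pi^0_n}=1$. Fix $n$. The one genuinely new ingredient is the choice of the set on which the construction is based: I want a set $X_0$ that is simultaneously $\Pi^0_n$ and $\Delta^0_n$-immune. Such a set exists. Take $S$ simple relative to $\emptyset^{(n-1)}$, so that $S$ is co-infinite, $\Sigma^0_n$, and $\overline S$ has no infinite $\Sigma^0_n$ subset, and put $X_0 := \overline S$; then $X_0$ is an infinite $\Pi^0_n$ set with no infinite $\Delta^0_n$ subset, so $X_0$ is $\mathfrak{D}_n$-immune.

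Now fix an \MLR\ real $R$, set $X = X_0 \oplus X_0$, $s_k = 2^{k^2}$, $k_n = \max\{\text{odd }k : s_k \le n\}$, and define $B$ by $B(n) = R(n - s_{k_n}) \cdot 1_X(k_n)$. Since $X_0$ is $\mathfrak{D}_n$-immune and $\mathfrak{D}_n$ is downward closed under finite-to-one reducibility, part~(1) of Lemma~\ref{dslem}, applied with $\B = \mathfrak{D}_n$ and with $R$ playing the role of the arbitrary real, gives $\dim_{si\Delta^0_n}(B) = 0$. This half of the argument goes through for any real in place of $R$; randomness is needed only for what follows.

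For $\dim_{si\Pi^0_n}(B) = 1$ I would exhibit a witnessing sequence and invoke part~(1) of Lemma~\ref{seqlem}. Let $N$ be the set of right endpoints of the $R$-segments of $B$. Unwinding the definition of $B$ --- here one must track the doubling $X_0 \oplus X_0$ and the odd indexing of $k_n$, exactly as in the proof of Lemma~\ref{dslem} --- this is $N = \{s_{k+2} : k\text{ odd},\ k \in X\} = \{\, 2^{(2m+3)^2} : m \in X_0 \,\}$. Since $m \mapsto 2^{(2m+3)^2}$ is computable and injective and $X_0$ is an infinite $\Pi^0_n$ set, $N$ is an infinite $\Pi^0_n$ set, hence a member of $\mathfrak{P}_n$. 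For $s_{k+2} \in N$, the string $B\upto s_{k+2}$ is $B\upto s_k$ followed by $R\upto(s_{k+2} - s_k)$, and $s_k$ is computable from the length $s_{k+2}$, so $R\upto(s_{k+2} - s_k)$ is recoverable from $B\upto s_{k+2}$ and $K(R\upto(s_{k+2} - s_k)) \le^+ K(B\upto s_{k+2})$. As $R$ is \MLR, Theorem~\ref{sch} gives $s_{k+2} - s_k \le^+ K(R\upto(s_{k+2} - s_k))$; combining with the easy bounds $K(\tau) \le^+ 2\abs{\tau}$ (for $B\upto s_k$) and $K(\tau) \le^+ \abs{\tau} + 2\log\abs{\tau}$, and using $s_k / s_{k+2} = 2^{-4k-4}$, shows $K(B\upto s_{k+2})/s_{k+2} \to 1$ as $k \to \infty$. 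Listing $N$ in increasing order as $\{n_j\}$, we thus have $\lim_j K(B\upto n_j)/n_j = 1$, so part~(1) of Lemma~\ref{seqlem} gives $\dim_{si\Pi^0_n}(B) = 1$. Taking $A := B$ finishes the proof.

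The hard part is not this assembly but the choice of $X_0$, which must reconcile two competing demands: enough definitional \emph{complexity} --- being $\Pi^0_n$ rather than, say, merely $\Pi^0_{n+1}$, so that the endpoint set $N$ lies in $\mathfrak{P}_n$ --- and enough \emph{thinness} --- $\Delta^0_n$-immunity, so that Lemma~\ref{dslem} genuinely collapses $\dim_{si\Delta^0_n}$. Relativized simple sets satisfy both at once, and that is what makes the theorem go through; the remaining verifications (that $N$ is as described, and that its elements carry complexity ratios tending to $1$) are routine given the two lemmas.
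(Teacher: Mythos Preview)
Your proof is correct and follows essentially the same approach as the paper: choose $X_0$ to be the complement of a (relativized) simple set, build $B$ via the double-segment construction with a \MLR\ real, apply Lemma~\ref{dslem}(\ref{dslem-one}) for $\dim_{si\Delta^0_n}(B)=0$, and apply Lemma~\ref{seqlem}(\ref{one}) to the $\Pi^0_n$ set of right endpoints of $R$-segments for $\dim_{si\Pi^0_n}(B)=1$. Your identification of the endpoint set $N=\{2^{(2m+3)^2}:m\in X_0\}$ is in fact more carefully stated than the paper's, which writes $M=\{2^{k^2}:k\text{ odd},\,k-1\in S\}$ (where $k-2$ appears to be intended), but the underlying argument is identical.
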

	
	\begin{proof}
		We prove the $n=1$ case, as the proofs for higher $n$ are analogous.
		
		Let $S_0$ be a co-c.e.\ immune set, and let $R$ be \MLR. Set $S = S_0\oplus S_0$, and define $k_n = \max\{\text{odd } k\mid 2^{k^2}\le n\}$. To build $A$ out of $0$-segments and $R$-segments, define $A(n) = R\left(n-2^{k_n^2}\right)\cdot 1_{S}(k_n)$.
		
		As $S_0$ is $\Pi^0_1$, so is $S$. Thus the set of right endpoints of $R$-segments,\linebreak $M = \left\{2^{k^2}\mid k\text{ is odd and }k-1\in S\right\}$ is also $\Pi^0_1$.
		By construction $\lim_{m\in M}K(A\upto m)/m = 1$ and thus the Sequence Lemma \ref{seqlem} gives that $\dim_{si\Pi^0_1}(A) = 1$.

		As the complement of a simple set is immune, the Double Segment Lemma \ref{dslem} shows that $\dim_{si\Delta^0_1}(A) = 0$.\qed
	\end{proof}

	The proof of analogous result for the $is$-dimensions is similar, using the same $S_0$ and $S$, and the real defined by $B(n) = R\left(n-2^{k_n^2}\right)\cdot 1_{\overline{S}}(k_n)$.
	\begin{theorem} For all $n\ge 1$ there exists a set $B$ with $\dim_{is\Pi^0_n}(B)=1$ and $\dim_{is\Delta^0_{n}}(B) = 0$.\end{theorem}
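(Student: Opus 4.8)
The plan is to carry out the $is$-analogue of the proof of Theorem~\ref{pidelsepsi}, using the real indicated just above. Fix a co-c.e.\ immune set $S_0$ (so $\overline{S_0}$ is simple and $S_0$ has no infinite computable subset), put $S=S_0\oplus S_0$, fix an \MLR\ real $R$, set $k_n=\max\{\text{odd }k\mid 2^{k^2}\le n\}$, and let $B(n)=R\!\left(n-2^{k_n^2}\right)\cdot 1_{\overline S}(k_n)$; thus $B$ is assembled from $R$-segments placed over $\overline S$ and $0$-segments placed over $S$. For $n>1$ one relativizes everything to $\emptyset^{(n-1)}$, replacing $S_0$ by the complement of a $\emptyset^{(n-1)}$-simple set (a $\Pi^0_n$ set immune to every infinite $\Sigma^0_n$ set), the argument being otherwise unchanged; so I treat only $n=1$. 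Because $\dim_{is\B}$ is antitone in $\B$ whereas $\dim_{si\B}$ is monotone, the two equalities one actually establishes are $\dim_{is\Delta^0_1}(B)=1$ and $\dim_{is\Pi^0_1}(B)=0$, with the $\Delta$- and $\Pi$-subscripts in exactly the opposite roles from the $si$-statement of Theorem~\ref{pidelsepsi}.

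For $\dim_{is\Pi^0_1}(B)=0$ I would exhibit an explicit witnessing sequence, mirroring the computation of $\dim_i(B)=0$ in Claim~2 of the proof of Theorem~\ref{incomp} — the only difference being that the witnessing set is now merely $\Pi^0_1$ rather than computable. Since $S$ is $\Pi^0_1$, the set $Z$ of right endpoints of $0$-segments of $B$ is an infinite $\Pi^0_1$ set; each $0$-segment is a block of zeros appended to a prefix of exponentially smaller length, so the same estimate as in~(\ref{using2}) gives $\lim_{m\in Z}K(B\upto m)/m=0$, and the Sequence Lemma~\ref{seqlem}, case~(\ref{two}), applied with $\B=\mathfrak P_1$ and $N=Z$, yields $\dim_{is\Pi^0_1}(B)=0$.

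For $\dim_{is\Delta^0_1}(B)=1$ I would invoke the Double Segment Lemma~\ref{dslem}, case~(\ref{dslem-two}), with $X_0=S_0$ (which is $\mathfrak D_1$-immune, having no infinite computable subset) and $X=X_0\oplus X_0$: our $B$ is exactly of the form $R\!\left(n-2^{k_n^2}\right)\cdot 1_{\overline X}(k_n)$ required there, so the lemma gives $\dim_{is\Delta^0_1}(B)=1$ outright. The heart of that lemma — and the main obstacle in the proof — is the immunity argument: for every infinite computable $N$, the $\oplus$-structure of $X$ (for odd $k$, both $k\in\overline X$ and $k-1\in\overline X$ amount to $\lfloor k/2\rfloor\in\overline{X_0}$) keeps $N$ from confining itself to the $0$-segments, so $N$ meets arbitrarily long random stretches of $B$ and hence $\sup_{n\in N}K(B\upto n)/n=1$; otherwise a finite-to-one reduction would turn $N$ into an infinite computable subset of $X_0$, contradicting immunity. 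The residual estimates — the $K(\sigma)\le^+2\abs{\sigma}$ bounds at segment boundaries and the fact that each segment dwarfs the combined length of all earlier ones — are routine and already carried out in Theorems~\ref{incomp} and~\ref{pidelsepsi} and in Lemma~\ref{dslem}.
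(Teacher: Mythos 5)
Your proposal is correct and is essentially the paper's own proof: the same $S_0$ and $S=S_0\oplus S_0$, the same real $B(n)=R\left(n-2^{k_n^2}\right)\cdot 1_{\overline{S}}(k_n)$, with the infinite $\Pi^0_1$ set of right endpoints of the $0$-segments handed to the Sequence Lemma \ref{seqlem}(\ref{two}) and the Double Segment Lemma \ref{dslem}(\ref{dslem-two}) (with $X_0=S_0$) supplying the $\Delta^0_1$ value, exactly as in the paper's one-line argument following Theorem \ref{pidelsepsi}. You are also right that the theorem as printed transposes the subscripts: since every infinite $\Delta^0_n$ set is $\Pi^0_n$, one always has $\dim_{is\Pi^0_n}(B)\le\dim_{is\Delta^0_n}(B)$, so the intended (and here established) separation is $\dim_{is\Delta^0_n}(B)=1$ together with $\dim_{is\Pi^0_n}(B)=0$.
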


	It remains to show that the $\Delta^0_{n+1}$ and $\Pi^0_n$ dimensions are all distinct.
	We can use the above lemmas for this, so the only difficulty is finding sets of the appropriate arithmetic complexity with the relevant immunity properties.

	\begin{lemma}\label{piim} For all $n\geq 1$, there is an infinite $\Delta^0_{n+1}$ set $S$ that is $\Pi^0_n$-immune.
	\end{lemma}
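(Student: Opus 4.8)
The plan is to construct the required set $S$ by a finite-extension (forcing) argument relative to the oracle $\emptyset^{(n)}$, so that $S$ is $\Delta^0_{n+1}$, together with a diagonalization that defeats every $\Pi^0_n$ set as a potential infinite subset. First I would fix an effective enumeration $\{P_e\}_{e\in\omega}$ of all $\Pi^0_n$ classes of subsets of $\omega$ — or more precisely, an enumeration of $\Pi^0_n$ index sets describing candidate infinite sets — noting that the predicate ``$P_e$ is infinite'' and membership queries ``$x\in P_e$'' can both be decided by an oracle for $\emptyset^{(n)}$. The construction builds a nested sequence of finite conditions (finite sets, or finite partial characteristic functions with a reserved ``coinfinite'' promise) $\sigma_0\subseteq\sigma_1\subseteq\cdots$, where at stage $e$ we ensure that $S$ is not a superset of $P_e$ whenever $P_e$ is infinite.

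The key steps, in order, are: (1) Set up the requirements $\mathcal R_e$: if $P_e$ is infinite, then $P_e\not\subseteq S$, i.e. there is some $x\in P_e$ with $x\notin S$. (2) At stage $e$, given the finite condition $\sigma_{e}$ built so far, ask $\emptyset^{(n)}$ whether $P_e$ is infinite; if not, do nothing (set $\sigma_{e+1}=\sigma_e$, perhaps padding one element into $S$ to keep $S$ infinite). If $P_e$ is infinite, search (again using $\emptyset^{(n)}$) for an element $x$ larger than all numbers decided by $\sigma_e$ with $x\in P_e$; such an $x$ exists by infinitude, and we extend the condition by declaring $x\notin S$, which satisfies $\mathcal R_e$ permanently. (3) Interleave a coding action guaranteeing $S$ is infinite: at each stage also put the least undecided number into $S$. (4) Verify that the whole construction is computable in $\emptyset^{(n)}$ — each stage makes finitely many $\emptyset^{(n)}$-queries and a $\emptyset^{(n)}$-computable unbounded search — so $S\le_T\emptyset^{(n)}$, hence $S$ is $\Delta^0_{n+1}$; and verify that every $\mathcal R_e$ is met, so no infinite $\Pi^0_n$ set is contained in $S$, i.e. $S$ is $\Pi^0_n$-immune, while $S$ is infinite by the coding action.

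The main obstacle I expect is the uniformity and decidability bookkeeping at level $n$: one must be careful that ``$P_e$ is infinite'' really is a $\emptyset^{(n)}$-decidable question for the right presentation of $\Pi^0_n$ sets (it is $\Pi^0_{n+1}$ in general, but becomes decidable relative to $\emptyset^{(n)}$ only after the correct relativization, since a $\Pi^0_n$ set is $\Delta^0_n$-in-$\emptyset^{(n-1)}$ style — so membership in $P_e$ is $\emptyset^{(n)}$-decidable and ``infinite'' is then $\emptyset^{(n+1)}$, one level too high). The fix is the standard one: do the forcing over $\emptyset^{(n+1)}$ for the infinitude checks but observe that only the \emph{outcome} (whether to act) needs the higher oracle while the \emph{extension chosen} can be found more cheaply — or, more cleanly, note that we do not need to know in advance whether $P_e$ is infinite; we can use a movable-marker strategy where $\mathcal R_e$ simply keeps trying to add a witness $x\notin S$ from $P_e$ above the current threshold, and this search is $\emptyset^{(n)}$-computable, succeeding exactly when $P_e$ is infinite. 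Settling which of these formulations keeps $S$ at $\Delta^0_{n+1}$ rather than $\Delta^0_{n+2}$ is the delicate point; I would present the movable-marker version, since there the whole construction is visibly $\emptyset^{(n)}$-computable. For $n=1$ this is just the classical construction of a $\Delta^0_2$ immune (indeed hyperimmune) set, and the general case relativizes it to $\emptyset^{(n-1)}$ with the $\Pi^0_n = \Pi^0_1(\emptyset^{(n-1)})$ identification.
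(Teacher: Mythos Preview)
Your approach is essentially correct and can be made rigorous, but it is genuinely different from the paper's argument. The paper does not build $S$ by diagonalization at all: for $n=1$ it simply takes a $\Delta^0_2$ cohesive set $C$ that is not co-c.e.\ and checks that $C$ is $\Pi^0_1$-immune. The verification is a two-line argument: if an infinite $\overline{W_e}\subseteq C$, cohesiveness forces $W_e\cap C$ finite, hence $C=^*\overline{W_e}$ and $\overline C$ is c.e., a contradiction. Your route trades this appeal to an off-the-shelf object for a self-contained priority/finite-extension construction; the paper's route is much shorter but presupposes the existence of such cohesive sets.

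One small correction: your closing remark that ``for $n=1$ this is just the classical construction of a $\Delta^0_2$ immune (indeed hyperimmune) set'' is not quite right. Ordinary immunity concerns $\Sigma^0_1$ subsets, and the usual constructions diagonalize against the $W_e$; here you must diagonalize against the $\overline{W_e}$, which is where your correctly-identified obstacle (the unbounded, possibly non-terminating search for an element of $\overline{W_e}$) actually bites. In particular, the complement of a simple set is a $\Delta^0_2$ immune set that is \emph{not} $\Pi^0_1$-immune, since it is itself an infinite $\Pi^0_1$ subset of itself. Your movable-marker fix---at each stage scan only boundedly far for a fresh $x\in P_e$ while interleaving an infinitude-preserving action, and argue by priority that every infinite $P_e$ eventually receives a witness in $\overline S$---is the right repair and does keep the construction $\emptyset^{(n)}$-computable; just be sure when you write it up that a finite $P_{e'}$ that never gets a witness cannot permanently block higher-indexed requirements.
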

	
    \begin{proof} We prove the unrelativized version, $n=1$. Let $C$ be a $\Delta^0_2$ cohesive set that is not co-c.e, i.e., for all $e$ either $W_e\cap C$ or $\overline{W_e}\cap C$ is finite. As $\overline{C}$ is not c.e.\ it cannot finitely differ from any $W_e$, so for all $e$, $W_e\setminus \overline{C} = W_e\cap C$ is infinite. Hence if $\overline{W_e}\subseteq C$, then by cohesiveness, $\overline{W_e}\cap C = \overline{W_e}$ is finite.\qed
\end{proof}

	\begin{theorem} For all $n\ge 1$ there exists a set $A$ with $\dim_{si\Delta^0_{n+1}}(A) = 1$ and $\dim_{si\Pi^0_n}(A)=0$.\label{pidelsepsin+1}\end{theorem}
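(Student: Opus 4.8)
The plan is to rerun the proof of Theorem \ref{pidelsepsi}, replacing the complement of a simple set by the set supplied by Lemma \ref{piim}, which carries precisely the immunity needed one level up. As usual I will treat $n=1$; the general case follows by using the level-$n$ form of Lemma \ref{piim} and writing $\mathfrak{P}_n$, $\mathfrak{D}_{n+1}$ in place of $\mathfrak{P}_1$, $\mathfrak{D}_2$. So let $S_0$ be an infinite $\Delta^0_2$ set that is $\Pi^0_1$-immune (Lemma \ref{piim}), let $R$ be \MLR, set $S=S_0\oplus S_0$, and put $k_n=\max\{\text{odd }k\mid 2^{k^2}\le n\}$. Define the witnessing real exactly as in Theorem \ref{pidelsepsi}, by $A(n)=R\bigl(n-2^{k_n^2}\bigr)\cdot 1_S(k_n)$, so that $A$ is assembled from $0$-segments and $R$-segments in the pattern dictated by $S$.

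For the lower value, apply the Double Segment Lemma \ref{dslem}(\ref{dslem-one}) with $\B=\mathfrak{P}_1$, $X_0=S_0$, and its ``arbitrary real'' taken to be $R$: since $S_0$ is $\Pi^0_1$-immune the hypothesis is met, and we conclude $\dim_{si\Pi^0_1}(A)=0$. For the upper value, let $M$ be the set of right endpoints of the $R$-segments of $A$. Since $S_0$, hence $S=S_0\oplus S_0$, is $\Delta^0_2$, and since $m\in M$ reduces to a single query to $S$ (each $m$ arises from at most one odd $k$), $M$ is an infinite $\Delta^0_2$ set. The usual segment estimate now applies --- each segment dwarfs the combined lengths of its predecessors, an $R$-segment is a literal initial segment of $R$, and $m\le^+ K(R\upto m)$ by Schnorr's Theorem \ref{sch}, so with the $2\abs{\sigma}$ bound one gets $\lim_{m\in M}K(A\upto m)/m=1$ --- whence the Sequence Lemma \ref{seqlem}(\ref{one}) applied with $\B=\mathfrak{D}_2$ gives $\dim_{si\Delta^0_2}(A)=1$.

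All of this is routine given the lemmas, so the only real obstacle is keeping the complexities of the index sets straight. One must verify that $M$ is genuinely $\Delta^0_2$ and infinite (the latter because $2j+1\in S$ for every $j\in S_0$), and observe that $M$ is \emph{not} $\Pi^0_1$: since $S_0$ finite-to-one reduces to $M$ (via $j\mapsto$ the corresponding power of two), a $\Pi^0_1$ presentation of $M$ would exhibit $S_0$ itself as an infinite $\Pi^0_1$ subset of $S_0$, contradicting its $\Pi^0_1$-immunity. This incompatibility of $M$ with $\mathfrak{P}_1$ is exactly what lets $\dim_{si\Pi^0_1}(A)=0$ and $\dim_{si\Delta^0_2}(A)=1$ hold simultaneously, and it is why Lemma \ref{piim} --- rather than some $\Pi^0_1$ set that is merely immune --- is the right input.
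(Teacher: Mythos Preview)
Your proof is correct and follows exactly the approach of the paper: rerun the construction of Theorem \ref{pidelsepsi} with $S_0$ replaced by the $\Delta^0_{n+1}$, $\Pi^0_n$-immune set from Lemma \ref{piim}, so that the Sequence Lemma now fires at the $\Delta^0_{n+1}$ level while the Double Segment Lemma forces $\dim_{si\Pi^0_n}(A)=0$. Your final paragraph checking that $M\notin\mathfrak{P}_1$ is a pleasant consistency check but not needed for the argument.
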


	\begin{proof}
		This is exactly like the proof of Theorem \ref{pidelsepsi}, but $S_0$ is now the $\Pi^0_1$-immune set guaranteed by Lemma \ref{piim}.\qed
	\end{proof}

	Again, the analogous result for $is$-dimensions is similar:
	
	\begin{theorem} For all $n\geq 1$ there exists a set $B$ with $\dim_{is\Pi^0_n}(B)=1$ and $\dim_{is\Delta^0_{n+1}}(B) = 0$.\label{pidelsepisn+1}\end{theorem}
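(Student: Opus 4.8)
The plan is to mirror the proof of Theorem~\ref{pidelsepsin+1}, interchanging the roles of the $0$- and $R$-segments and calling on the $is$-halves of the two lemmas. Fix the infinite $\Delta^0_{n+1}$ set $S_0$ that is $\Pi^0_n$-immune, furnished by Lemma~\ref{piim}, and put $S = S_0\oplus S_0$, which is again $\Delta^0_{n+1}$. Let $R$ be \MLR, let $k_n = \max\{\text{odd }k : 2^{k^2}\le n\}$, and define
\[
    B(n) = R\!\left(n - 2^{k_n^2}\right)\cdot 1_{\overline S}(k_n),
\]
so that $B$ is built from $R$-segments (on the blocks with $k_n\in\overline S$) and $0$-segments (on the blocks with $k_n\in S$), each segment super-exponentially longer than all of its predecessors combined, exactly as in Lemma~\ref{dslem}.

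For $\dim_{is\Pi^0_n}(B)=1$ I would quote part~(\ref{dslem-two}) of the Double Segment Lemma verbatim: the real $B$ has precisely the shape $R(n-s_{k_n})\cdot 1_{\overline X}(k_n)$ with $s_k=2^{k^2}$, $X=S=S_0\oplus S_0$, and $S_0$ is $\mathfrak{P}_n$-immune, so the lemma delivers $\dim_{is\mathfrak{P}_n}(B)=1$ directly.

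For $\dim_{is\Delta^0_{n+1}}(B)=0$ I would produce a single witnessing sequence and invoke part~(\ref{two}) of the Sequence Lemma. Since $S$ is $\Delta^0_{n+1}$, the set $M$ of right endpoints of $0$-segments of $B$ is an infinite $\Delta^0_{n+1}$ set, being a computable relabeling of $S$. For $m\in M$, writing $s'$ for the left endpoint of the $0$-segment ending at $m$, the estimate $K(B\upto m)\le^+ K(B\upto s') + K(0^{\,m-s'})\le^+ 2s' + 2\log m$, combined with $s'/m\to 0$ and $(\log m)/m\to 0$, gives $\lim_{m\in M}K(B\upto m)/m = 0$; the Sequence Lemma then yields $\dim_{is\Delta^0_{n+1}}(B)=0$.

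The one place that needs genuine rather than merely notational care is the appeal to part~(\ref{dslem-two}) of the Double Segment Lemma, whose proof was only sketched: one must confirm that at the positions inside an arbitrary $N\in\mathfrak{P}_n$ that lie in an $R$-segment immediately following an $R$-segment, the ratio $K(B\upto n)/n$ really does approach $1$ --- that is, that the $\mathcal{O}(\abs\sigma)$-sized overhead contributed by the short initial portion of $B\upto n$ is swamped by the incompressibility of the long $R$-block (Theorem~\ref{sch}). This is the exact dual of the computation already performed for part~(\ref{dslem-one}), so I expect it to transcribe with no new ideas; everything else is bookkeeping about the arithmetic complexity of the index sets involved.
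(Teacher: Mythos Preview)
Your proposal is correct and matches the paper's intended argument exactly: the paper only says ``the analogous result for $is$-dimensions is similar,'' and what you have written is precisely the dualization it has in mind --- take the $\Delta^0_{n+1}$, $\Pi^0_n$-immune $S_0$ from Lemma~\ref{piim}, set $S=S_0\oplus S_0$, form $B(n)=R(n-2^{k_n^2})\cdot 1_{\overline S}(k_n)$, then apply part~(\ref{dslem-two}) of Lemma~\ref{dslem} for $\dim_{is\Pi^0_n}(B)=1$ and part~(\ref{two}) of Lemma~\ref{seqlem} (with the $\Delta^0_{n+1}$ set of right endpoints of $0$-segments) for $\dim_{is\Delta^0_{n+1}}(B)=0$. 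Your caveat about verifying the complexity estimate inside Lemma~\ref{dslem}(\ref{dslem-two}) is appropriate but, as you note, it is the straightforward dual of the computation already given.
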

	
	After asking questions about the arithmetic hierarchy, it is natural to turn our attention to the Turing degrees. As the familiar notion of $B$-immunity for an oracle is exactly $\Delta^0_1(B)$-immunity for a class, we have access to the usual lemmas.
	We shall embed the Turing degrees into the $si$-$\Delta^0_1(A)$ dimensions (and dually, $is$-$\Delta^0_1(A)$). First, a helpful lemma:
	
	\begin{lemma}[Immunity Lemma]\label{immlem} If $A\nleq_T B$, there is an $S\le_T A$ such that $S$ is $B$-immune.\end{lemma}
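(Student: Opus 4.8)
The plan is to use $A$ to build an infinite set $S\le_T A$ that encodes ever-longer prefixes of $A$ \emph{redundantly}, so that any infinite $B$-computable subset of $S$ would let $B$ reconstruct all of $A$, contradicting $A\nleq_T B$. Recall that in the paper's terminology $B$-immune means $\Delta^0_1(B)$-immune, i.e.\ $S$ is infinite and no infinite $B$-computable set is contained in $S$.

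First I fix, computably, a partition of $\omega$ into consecutive finite intervals $I_0<I_1<\cdots$ with $\abs{I_n}=2^{n+1}$, and for each $n$ a computable bijection $\sigma\mapsto a_n(\sigma)$ from $2^{n+1}$ onto $I_n$. Then I define
\[
	S \;=\; \{\, a_n(A\upto (n+1)) : n\in\omega \,\},
\]
so that $S$ meets each $I_n$ in exactly the single point coding the prefix $A\upto(n+1)$. The verification has three parts. (i) $S\le_T A$: to decide $m\in S$, find the $n$ with $m\in I_n$, compute $A\upto(n+1)$ from the oracle $A$, and test whether $m=a_n(A\upto(n+1))$. (ii) $S$ is infinite, since $\abs{S\cap I_n}=1$ for all $n$. (iii) $S$ is $B$-immune: suppose $W\subseteq S$ is infinite and $B$-computable. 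Since $\abs{S\cap I_n}=1$ for every $n$, $W$ meets infinitely many intervals. I then give a $B$-algorithm for $A$: on input $m$, use $B$ to search for some $n\ge m$ with $W\cap I_n\ne\emptyset$ — the search halts because $W$ meets infinitely many $I_n$, and each test is a finite $B$-decidable check; for the unique $p\in W\cap I_n$ we have $p\in S$, hence $p=a_n(A\upto(n+1))$, so $A\upto(n+1)=a_n^{-1}(p)$, and as $m\le n$ this yields $A(m)$. Thus $A\le_T B$, a contradiction, so no such $W$ exists.

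The one genuine idea — the step I would flag as the crux — is the redundancy of the coding: each interval records the whole prefix $A\upto(n+1)$, not merely the bit $A(n)$. If single bits were coded instead, an infinite $B$-computable subset of $S$ would reveal only $\{A(n):n\in E\}$ for some possibly sparse $E$, far short of computing $A$; coding prefixes makes the mere fact that $W$ meets infinitely many intervals enough to recover every bit of $A$. All remaining points are routine.
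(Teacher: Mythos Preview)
Your proof is correct and follows essentially the same idea as the paper's: take $S$ to be (a coding of) the set of prefixes of $A$, so that any infinite $B$-computable subset lets $B$ recover arbitrarily long prefixes of $A$ and hence compute $A$. The paper's version is simply more terse --- it sets $S$ to be the set of finite prefixes of $A$ under a standard string coding rather than spelling out an interval-based encoding --- but the argument is the same.
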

	
	\begin{proof}
		Let $S$ be the set of finite prefixes of $A$. If $S$ contains a $B$-computable infinite subset $C$, then we can recover $A$ from $C$, but then $A\leq_T C\leq_T B$.\qed
	\end{proof}
	
	\begin{theorem}[{$si$-$\Delta^0_1$ Embedding Theorem}]\label{delembedding} Let $A,B\in \Cant$. Then  $A\le_T B$ iff
	for all $X\in \Cant, \dim_{si\Delta^0_1 (A)} (X) \le \dim_{si\Delta^0_1 (B)} (X)$.
	\end{theorem}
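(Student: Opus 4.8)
The plan is to prove both directions. The forward direction ($A \le_T B$ implies the dimension inequality) should be the easy one: if $A \le_T B$, then every $\Delta^0_1(A)$ set is $\Delta^0_1(B)$, so $\mathfrak{D}_1(A) \subseteq \mathfrak{D}_1(B)$. Taking a supremum over the larger class $\mathfrak{D}_1(B)$ of the quantity $\inf_{n \in N} K(X \upto n)/n$ can only increase the value, giving $\dim_{si\Delta^0_1(A)}(X) \le \dim_{si\Delta^0_1(B)}(X)$ for every $X$. This is a one-line containment argument, entirely parallel to the trivial directions already seen in the paper.

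For the reverse (contrapositive) direction, assume $A \not\le_T B$; I would construct a single witness $X$ with $\dim_{si\Delta^0_1(A)}(X) = 1$ but $\dim_{si\Delta^0_1(B)}(X) = 0$. First, apply the Immunity Lemma (Lemma \ref{immlem}) to get a set $S_0 \le_T A$ that is $B$-immune, i.e.\ $\Delta^0_1(B)$-immune. Then form $S_0 \oplus S_0$ and run the construction from the Double Segment Lemma: with $s_k = 2^{k^2}$, $k_n = \max\{\text{odd }k \mid s_k \le n\}$, $R$ Martin-L\"of random, set $X(n) = R(n - s_{k_n}) \cdot 1_{S_0 \oplus S_0}(k_n)$, building $X$ out of alternating $0$-segments and $R$-segments placed according to $S_0 \oplus S_0$. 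By part (\ref{dslem-one}) of Lemma \ref{dslem} (in the $\dim_{si}$ form, with the roles arranged so that $B$-immunity of $S_0$ kills the $\Delta^0_1(B)$-dimension), we get $\dim_{si\Delta^0_1(B)}(X) = 0$. On the other side, the set $M$ of right endpoints of $R$-segments is computable from $S_0$, hence $\Delta^0_1(A)$, and along $M$ the complexity $K(X \upto m)/m \to 1$ because each $R$-segment dwarfs all earlier segments combined (the estimate \eqref{using} from the proof of Theorem \ref{incomp}); so the Sequence Lemma (Lemma \ref{seqlem}) gives $\dim_{si\Delta^0_1(A)}(X) = 1$. This $X$ then witnesses the failure of the dimension inequality, completing the contrapositive.

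The main obstacle I anticipate is bookkeeping in the second direction: one must be careful that the \emph{same} parity/offset conventions used in the Double Segment Lemma line up with the construction here, so that the $B$-immunity of $S_0$ is invoked exactly where needed (on the indices $\lfloor k_n/2 \rfloor$ of $0$-segments following $0$-segments) while the set $M$ of $R$-segment endpoints remains genuinely $A$-computable and the limit $K(X\upto m)/m \to 1$ along $M$ is valid. Since Lemma \ref{dslem} and Lemma \ref{seqlem} are already established in exactly the generality needed (any class $\B$ downward closed under finite-to-one reducibility, with $\Delta^0_1(B)$-immunity being $B$-immunity), the argument should reduce to citing them with $\B = \mathfrak{D}_1(B)$ and $\B = \mathfrak{D}_1(A)$ respectively, plus the Immunity Lemma to produce $S_0$. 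The dual $is$-$\Delta^0_1$ statement follows by the same construction with $R$-segments and $0$-segments interchanged, exactly as in the $is$-versions of the earlier theorems.
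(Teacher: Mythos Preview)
Your proposal is correct and follows essentially the same approach as the paper: the forward direction is the one-line containment $\Delta^0_1(A)\subseteq\Delta^0_1(B)$, and for the contrapositive you invoke the Immunity Lemma to obtain $S_0\le_T A$ that is $B$-immune, then run the construction of Theorem~\ref{pidelsepsi} with this $S_0$, appealing to the Double Segment Lemma for $\dim_{si\Delta^0_1(B)}(X)=0$ and the Sequence Lemma (via the $A$-computable set of $R$-segment endpoints) for $\dim_{si\Delta^0_1(A)}(X)=1$. The paper's own proof is the terse instruction ``exactly like the proof of Theorem~\ref{pidelsepsi}, now using the set guaranteed by the Immunity Lemma as $S_0$,'' which is precisely what you have unpacked.
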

	\begin{proof}
		\textbf{[$\Ra$]} Immediate, as $\Delta^0_1(A)\subseteq \Delta^0_1(B)$.\\
		\textbf{[$\La$]} This is again exactly like the proof of Theorem \ref{pidelsepsi}, now using the set guaranteed by the Immunity Lemma \ref{immlem} as $S_0$.\qed
	\end{proof}

The result for $is$-dimensions is again similar:

\begin{theorem}[{$si$-$\Delta^0_1$ Embedding Theorem}] Let $A,B\in \Cant$. Then  $A\le_T B$ iff
	for all $X\in \Cant, \dim_{is\Delta^0_1 (A)} (X) \ge \dim_{is\Delta^0_1 (B)} (X)$.
	\end{theorem}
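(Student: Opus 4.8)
The plan is to run the proof of Theorem \ref{delembedding} with the two directions of the complexity estimate interchanged. For the forward implication, if $A\le_T B$ then $\Delta^0_1(A)\subseteq\Delta^0_1(B)$, and since $\dim_{is\B}(X)=\inf_{N\in\B}\sup_{n\in N}K(X\upto n)/n$ is an \emph{infimum} over the class $\B$, passing to a larger class can only decrease it; hence $\dim_{is\Delta^0_1(A)}(X)\ge\dim_{is\Delta^0_1(B)}(X)$ for every $X\in\Cant$.

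For the converse I will argue contrapositively. Assume $A\nleq_T B$; I will produce an $X$ with $\dim_{is\Delta^0_1(A)}(X)=0$ and $\dim_{is\Delta^0_1(B)}(X)=1$, which violates the displayed inequality. By the Immunity Lemma \ref{immlem} fix $S_0\le_T A$ that is $B$-immune --- equivalently, $\Delta^0_1(B)$-immune --- and set $S=S_0\oplus S_0$. With $s_k=2^{k^2}$ and $k_n=\max\{\text{odd }k\mid s_k\le n\}$ as in the Double Segment Lemma, let $R$ be \MLR\ and define
\[
	X(n)=R\!\left(n-s_{k_n}\right)\cdot 1_{\overline S}(k_n).
\]
This $X$ is exactly the real to which Lemma \ref{dslem}(\ref{dslem-two}) applies when taken with $X_0=S_0$ and the class $\Delta^0_1(B)$, so $\dim_{is\Delta^0_1(B)}(X)=1$. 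For the other dimension, observe that a position $n$ lies in a $0$-segment of $X$ precisely when $k_n\in S$, so the right endpoints of the $0$-segments form the set $Z=\{s_{k+2}\mid k\text{ odd and }k\in S\}$, which is infinite and computable from $S\le_T A$, hence a member of $\Delta^0_1(A)$. For $s_{k+2}\in Z$ the prefix $X\upto s_{k+2}$ consists of $X\upto s_k$ followed by $0^{s_{k+2}-s_k}$, so $K(X\upto s_{k+2})\le^+ 2s_k+2(k+2)^2$, and dividing by $s_{k+2}=2^{(k+2)^2}$ gives $\lim_{s\in Z}K(X\upto s)/s=0$. By the Sequence Lemma \ref{seqlem}(\ref{two}), applied with the class $\Delta^0_1(A)$ and the set $Z$, $\dim_{is\Delta^0_1(A)}(X)=0$. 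Thus $X$ witnesses $\dim_{is\Delta^0_1(A)}(X)<\dim_{is\Delta^0_1(B)}(X)$, and this contradiction forces $A\le_T B$.

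The delicate point --- and the only real obstacle --- is that both dimension computations must hold for this single $X$: $\dim_{is\Delta^0_1(B)}(X)=1$ asserts that \emph{every} infinite $B$-computable set meets prefixes of $X$ of relative complexity arbitrarily close to $1$, whereas $\dim_{is\Delta^0_1(A)}(X)=0$ is witnessed by the set $Z$, on which the complexity ratio tends to $0$. These coexist precisely because $B$-immunity of $S_0$ forces $S_0\nleq_T B$, so that $Z\notin\Delta^0_1(B)$: the low-complexity escape route along the $0$-segments is visible to $A$ but not to $B$. Everything else is a routine rerun of the complexity estimates from the proof of Theorem \ref{pidelsepsi}.
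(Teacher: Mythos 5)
Your proposal is correct and follows exactly the route the paper intends: the paper states this $is$-version without proof (``again similar''), pointing back to the $si$ Embedding Theorem and the $is$-analogue of Theorem \ref{pidelsepsi}, and your argument is precisely that blueprint --- forward direction from $\Delta^0_1(A)\subseteq\Delta^0_1(B)$ and the infimum, converse via the Immunity Lemma \ref{immlem}, the construction $X(n)=R(n-s_{k_n})\cdot 1_{\overline S}(k_n)$, Lemma \ref{dslem}(\ref{dslem-two}) for $\dim_{is\Delta^0_1(B)}(X)=1$, and the Sequence Lemma \ref{seqlem}(\ref{two}) along the $A$-computable right endpoints of $0$-segments for $\dim_{is\Delta^0_1(A)}(X)=0$. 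The complexity estimate $K(X\upto s_{k+2})\le^+ 2s_k+2(k+2)^2$ and the observation that $Z$ is infinite and $A$-computable are exactly the details the paper leaves implicit, so nothing further is needed.
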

	
	We can push this a little further by considering weak truth table reductions: %Turing irreducibility of degrees implies Medvedev wtt-irreducibility of dimensions.

\begin{definition} $A$ is \emph{weak truth table reducible to} $B$ ($A\leq_{wtt} B$) if there exists a computable function $f$ and an oracle machine $\Phi$ such that $\Phi^B = A$, and the use of $\Phi^X(n)$ is bounded by $f(n)$ for all $n$ ($\Phi^X(n)$ is not guaranteed to halt).
\end{definition}
	\begin{theorem}\label{tt}
		If $A\not\le_T B$, then for all wtt-reductions $\Phi$ there exists an $X$ such that $\dim_{si\Delta^0_1(A)}(X)=1$ and, either $\Phi^X$ is not total or $\dim_{si\Delta^0_1(B)}(\Phi^X)=0$.
	\end{theorem}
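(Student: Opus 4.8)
The plan is to combine the Immunity Lemma with the Double Segment Lemma, but with a twist: since we must defeat a fixed wtt-reduction $\Phi$ rather than merely control the base dimensions, the construction of $X$ has to anticipate where $\Phi$ "reads" bits of $X$ and arrange that the output $\Phi^X$, if total, consists of a mix of $0$-segments and $R$-segments whose $0$-segment structure is forced by a $B$-immune set. Concretely, let $S \le_T A$ be the $B$-immune set guaranteed by Lemma~\ref{immlem} (using $A \not\le_T B$), and set $S_0 = S$, $X_0 = S_0 \oplus S_0$ in the notation of Lemma~\ref{dslem}. The real $X$ will be built, as in Theorem~\ref{pidelsepsi}, by interleaving $R$-segments and $0$-segments along the scale $s_k = 2^{k^2}$, according to membership of $k_n$ in $X_0$; since $S \le_T A$, this $X$ is $A$-computable in its segment pattern, so part~(\ref{dslem-two}) of the Double Segment Lemma (with $\B = \Delta^0_1(A)$) gives $\dim_{si\Delta^0_1(A)}(X) = 1$ directly. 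The work is entirely on the $B$-side.

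The key step is the analysis of $\Phi^X$ assuming it is total. First I would use the use-bound $f$ to locate, for each output position $m$, the (finite) set of input positions of $X$ that $\Phi^X(m)$ queries. Because the segments of $X$ grow super-exponentially (each segment longer than the sum of all previous ones), for any fixed $m$ the queried positions of $X$ all lie below some segment boundary $s_k$; dually, by the time $\Phi$ has produced a long prefix of its output, it has only consulted finitely many segments of $X$. The idea is then: $\Phi^X \upto N$ can be reconstructed, up to an additive constant, from (i) a description of the relevant finite initial chunk of $X$ and (ii) the reduction $\Phi$ and bound $f$, which are fixed. In particular, when $\Phi^X$ only depends on a prefix of $X$ ending inside a long $0$-segment of $X$, the prefix $\Phi^X \upto N$ has complexity bounded by roughly $2 s_{k-1} + \mathcal{O}(\log)$ — the same kind of estimate as in (\ref{using2}) — so $K(\Phi^X \upto N)/N$ is small at those $N$.

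Now I would run the contradiction argument of the Double Segment Lemma on the $B$-side: suppose some $N \in \Delta^0_1(B)$ had $\inf_{n \in N} K(\Phi^X \upto n)/n > 0$, equivalently that only finitely many $n \in N$ fall in output regions of $\Phi^X$ that depend only on a prefix of $X$ terminating inside a "$0$-segment following a $0$-segment." Translating through $\Phi$ and $f$ (a finite-to-one operation on indices, using that $f$ is computable and the segment scale is computable), this would produce an infinite $\Delta^0_1(B)$ set sitting inside $X_0 = S_0 \oplus S_0$, hence an infinite $B$-computable subset of $S_0$ — contradicting the $B$-immunity of $S$. Therefore $\inf_{n\in N} K(\Phi^X\upto n)/n = 0$ for every $N \in \Delta^0_1(B)$, and taking the supremum over such $N$ yields $\dim_{si\Delta^0_1(B)}(\Phi^X) = 0$.

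The main obstacle I anticipate is bookkeeping the interaction between the wtt use-bound $f$ and the segment structure of $X$: one must check that "the set of output positions whose computation reads only inside a given prefix of $X$" is itself computable (uniformly in the prefix), so that the pull-back of a $\Delta^0_1(B)$ set along $\Phi$ really is $\Delta^0_1(B)$ and finite-to-one in the relevant sense. This is where having a genuine computable use-bound $f$ (rather than an arbitrary Turing reduction) is essential — it is exactly what lets us decide, effectively, which segments of $X$ a given output bit depends on, and hence makes the immunity contradiction go through. The complexity estimates themselves are routine once this combinatorial correspondence is set up, being the same $2|\sigma|$-type bounds used in Lemma~\ref{dslem}.
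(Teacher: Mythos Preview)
Your overall architecture is right --- Immunity Lemma plus segment construction plus Sequence Lemma on the $A$-side, immunity contradiction on the $B$-side --- but there is a genuine gap: you build $X$ on the \emph{fixed} scale $s_k=2^{k^2}$, independent of the use bound $f$. On the $B$-side you want to say that when $g(n)$ (the monotone envelope of $f$) lands inside consecutive $0$-segments of $X$, the ratio $K(\Phi^X\upto n)/n$ is small. The numerator is indeed bounded by roughly $2s_{k-1}$ for the relevant $k$, but you have no lower bound on the \emph{denominator} $n$ in terms of the segment scale. If $g$ grows superexponentially (say $g(n)=2^{n^3}$), then $n$ can be minuscule compared to $s_{k-1}$, and the bound $2s_{k-1}/n$ is vacuous. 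The paper fixes this by making the scale depend on $g$: it defines $\lambda_k=\lambda_{k-1}+\ell_{k-1}$ and $\ell_k=\min\{2^{m^2}\mid g(\lambda_k)<2^{m^2}\}$, so that $g(n)\ge\ell_a$ forces $n>\lambda_a>\ell_{a-1}$, supplying exactly the missing lower bound on $n$.

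A second, related gap: doubling $S_0$ is not enough once the scale is adapted. With two consecutive $0$-segments you only get $K(\Phi^X\upto n)\le^+2\ell_{a-1}$ while $n>\ell_{a-1}$, giving a ratio near $2$, not $0$. The paper therefore takes $S=\bigoplus_{i=0}^2 S_0$ so that immunity of $S_0$ forces infinitely many $n\in N$ with three consecutive $0$-segments at $g(n)$; then $K(\Phi^X\upto n)\le^+2\ell_{a-2}$ with $n>\ell_{a-1}$ and $\ell_{a-2}/\ell_{a-1}\to 0$ does the job. (Minor aside: part~(2) of Lemma~\ref{dslem} concerns $\dim_{is\B}$, not $\dim_{si\B}$; for $\dim_{si\Delta^0_1(A)}(X)=1$ you want the Sequence Lemma on the $A$-computable set of $R$-segment endpoints, as in Theorem~\ref{pidelsepsi}. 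Also, the index map $n\mapsto k_{g(n)}$ is not finite-to-one when $g$ is slow; what you actually need is that its image under a $B$-computable $N$ is infinite and $B$-computable, which is true but should be argued directly rather than via finite-to-one closure.)
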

	\begin{proof}
		Let $A\not\le_T B$, and let $\Phi$ be a wtt-reduction. Let $f$ be a computable bound on the use of $\Phi$, and define $g(n) = \max\{f(i)\mid i\le n\}$, so that $K(\Phi^X\upto n)\le^+ K(X\upto g(n)) + 2\log(n)$. For notational clarity, for the rest of this proof we will denote inequalities that hold up to logarithmic (in $n$) terms as $\leq^{\log}$.
		
		Next, we define two sequences $\ell_k$ and $\lambda_k$ which play the role $2^{k^2}$ played in previous constructions:
		\begin{align*}
			\ell_0 = \lambda_0 = 1, &&\lambda_k = \lambda_{k-1} +  \ell_{k-1}, && \ell_k = \min\left\{2^{n^2}\mid g(\lambda_{k})<2^{n^2}\right\}.
		\end{align*}
	    These definitions have the useful consequence that $\lim_k \ell_{k-1}/\ell_{k} = 0$. To see this, suppose $\ell_{k-1} = 2^{(n-1)^2}$. As $g$ is an increasing function, the definitions give
		$$\ell_k>g(\lambda_{k}) \ge \lambda_{k}=\lambda_{k-1} + \ell_{k-1}\ge \ell_{k-1}=2^{(n-1)^2}.$$
		Hence $\ell_{k}\ge 2^{n^2}$, so that $\ell_{k-1}/\ell_k \le 2^{-2n+1}$. As $\ell_k>\ell_{k-1}$ for all $k$, this ratio can be made arbitrarily small, giving the limit.

        A triple recursive join operation is defined by
        \[
        \bigoplus_{i=0}^2 A_i = \{3k+j \mid k\in A_j,\quad 0\le j\le 2\},\quad A_0,A_1,A_2\subseteq\omega.
        \]
		
		Let $S_0\leq_T A$ be as guaranteed by Lemma \ref{immlem}, and define $S=\bigoplus_{i=0}^2 S_0$.
		%$$S = \{2n\mid n\in S_0\}\cup\{2n+1\mid S_0\}\cup\{2n+2\mid S_0\}.$$ 
		Let $R$ be \MLR, and define $X(n) = R\left(n - \ell_{k_n}\right) \cdot 1_{S}(k_n)$, where $k_n = \max\{k = 2 \pmod 3\mid \ell_k\le n\}$. This definition takes an unusual form compared to the previous ones we have seen in order to handle the interplay between $\lambda_k$ and $\ell_k$ - specifically the growth rate of $g(n)$. We are effectively ``tripling up" bits of $S_0$ (rather than doubling them as before) to account for the possibility that $g(n)$ grows superexponentially, with the condition that $k=2\pmod 3$ replacing the condition that $k$ is odd.\\
	
		\noindent \textbf{Claim 1:} $\dim_{si\Delta^0_1(A)}(X) = 1$.\\
		\emph{Proof:} As $N= \left\{\ell_k\right\}_{k\in S}$ is an $A$-computable set, by the Sequence Lemma \ref{seqlem} it suffices to show that
		\(
			\lim_{k\in S} K(X\upto \ell_k)/\ell_k = 1.
		\)
		For $\ell_k\in N$,
		\begin{align*}
			K(X\upto \ell_k)&\ge^{\log} K(X[\ell_{k-1}, \ell_{k}]) - K(X\upto \ell_{k-1})\\
			&\ge\ \ K(R\upto(\ell_k-\ell_{k-1})) - 2\ell_{k-1} \tag{as $k\in S$}\\
			&\ge^{\log} \ell_k - \ell_{k-1} - 2\ell_{k-1} \tag{as $R$ is \MLR}\\
			\dfrac{K(X\upto \ell_k)}{\ell_k}&\ge^{\log} \dfrac{\ell_k - 3\ell_{k-1}}{\ell_k}= 1 - 3\dfrac{\ell_{k-1}}{\ell_k}.
		\end{align*}
		which gives the desired limit by the above.
		
		\noindent\textbf{Claim 2:} $\dim_{si\Delta^0_1(B)}(\Phi^X) = 0$.\\
		\emph{Proof:} Suppose $N\leq_T B$. For notation, define $a = k_{g(n)}$. By mimicking the proof of Lemma \ref{dslem}, we can use the $B$-immunity of $S$ to show that there are infinitely many $n\in N$ such that $g(n)$ is in a $0$-segment following two $0$-segments, i.e., $a-2, a-1, a\not\in S$. By the definition of $X$,
		$$X[\ell_{a-2}, \ell_{a+1}] = 0^{\ell_{a+1} - \ell_{a-2}}.$$
		Suppose the value $X(m)$ is queried in the course of computing $\Phi^X\upto n$. By the definitions of $g$, $a$, and $\ell_k$,
		$m\leq g(n)<\ell_{a+1}.$
		Hence either $m<\ell_{a-2}$ or $m\in[\ell_{a-2}, \ell_{a+1}]$, so that $X(m)=0$. Thus to compute $\Phi^X\upto n$, up to a constant it suffices to know $X\upto \ell_{a-2}$. Thus
		$$K(\Phi^X\upto n)\le^+ K(X\upto \ell_{a-2})\leq^+ 2\ell_{a-2}$$
        As $g(n)>\ell_{a}$ it must be that $n>\lambda_a$. Dividing by $n$, we find that
		\begin{align*}
		\dfrac{K(\Phi^X\upto n)}{n}\leq^+\dfrac{2\ell_{a-2}}{\lambda_a}<
		 \dfrac{2\ell_{a-2}}{\lambda_{a-1}+\ell_{a-1}}
		 <\dfrac{2\ell_{a-2}}{\ell_{a-1}}.
		 \end{align*}
		As there are infinitely many of these $n$, it must be that $\inf_{n\in N} K(\Phi^X\upto n)/n = 0$.
			This holds for every $N\leq_T B$, so taking a supremum gives the result.\qed\end{proof}
\emph{Remark.}  We only consider $si$-dimensions for this theorem, as it is not clear what an appropriate analogue for $is$-dimensions would be. The natural dual statement for $is$-dimensions would be that for all reductions $\Phi$ there is an $X$ such that $\dim_{is\Delta^0_1(A)}(X)=0$, and either $\Phi^X$ is not total or $\dim_{is\Delta^0_1(B)}(\Phi^X)=1$.
But many reductions use only computably much of their oracle, so that $\Phi^X$ is a computable set.
This degenerate case is not a problem for the $si$ theorem, as its conclusion requires $\dim_{\Delta^0_1(B)}(\Phi^X)=0$. But for an $is$ version, it is not even enough to require that $\Phi^X$ is not computable - consider the reduction that repeats the $n$th bit of $X$ $2n-1$ times, so that $n$ bits of $X$ suffice to compute $n^2$ bits of $\Phi^X$.
Certainly $\Phi^X\equiv_{wtt} X$, so that $\Phi^X$ is non-computable iff $X$ is.
But 
\begin{align*} \dfrac{K(\Phi^X\upto n)}{n}\leq^+\dfrac{K(X\upto \sqrt{n})}{n}\leq^+\dfrac{2\sqrt{n}}{n}
\end{align*}
for all $n$, so that $\dim_p(\Phi^X) = 0$, and hence all other dimensions are 0 as well.

	\bibliographystyle{plain}
	\bibliography{cie21webb}

\begin{thebibliography}{10}

\bibitem{MR2341912}
Krishna~B. Athreya, John~M. Hitchcock, Jack~H. Lutz, and Elvira Mayordomo.
\newblock Effective strong dimension in algorithmic information and
  computational complexity.
\newblock {\em SIAM J. Comput.}, 37(3):671--705, 2007.

\bibitem{MR2732288}
Rodney~G. Downey and Denis~R. Hirschfeldt.
\newblock {\em Algorithmic randomness and complexity}.
\newblock Theory and Applications of Computability. Springer, New York, 2010.

\bibitem{MR3116543}
Cameron~E. Freer and Bj{\o}rn Kjos-Hanssen.
\newblock Randomness extraction and asymptotic {H}amming distance.
\newblock {\em Log. Methods Comput. Sci.}, 9(3):3:27, 14, 2013.

\bibitem{MR3721461}
Noam Greenberg, Joseph~S. Miller, Alexander Shen, and Linda~Brown Westrick.
\newblock Dimension 1 sequences are close to randoms.
\newblock {\em Theoret. Comput. Sci.}, 705:99--112, 2018.

\bibitem{Generic}
C.~Jockusch.
\newblock Degrees of generic sets.
\newblock In F.~R. Drake and S.~S. Wainer, editors, {\em Recursion Theory: its
  Generalisations and Applications}, pages 110--139. Cambridge University
  Press, 1980.

\bibitem{MR1926330}
Elvira Mayordomo.
\newblock A {K}olmogorov complexity characterization of constructive
  {H}ausdorff dimension.
\newblock {\em Inform. Process. Lett.}, 84(1):1--3, 2002.

\bibitem{MR2183813}
Wolfgang Merkle, Joseph~S. Miller, Andr\'{e} Nies, Jan Reimann, and Frank
  Stephan.
\newblock Kolmogorov-{L}oveland randomness and stochasticity.
\newblock {\em Ann. Pure Appl. Logic}, 138(1-3):183--210, 2006.

\bibitem{MR3890460}
Kenshi Miyabe.
\newblock Muchnik degrees and {M}edvedev degrees of randomness notions.
\newblock In {\em Proceedings of the 14th and 15th {A}sian {L}ogic
  {C}onferences}, pages 108--128. World Sci. Publ., Hackensack, NJ, 2019.

\bibitem{MR2548883}
Andr\'{e} Nies.
\newblock {\em Computability and randomness}, volume~51 of {\em Oxford Logic
  Guides}.
\newblock Oxford University Press, Oxford, 2009.

\bibitem{MR2140044}
Andr\'{e} Nies, Frank Stephan, and Sebastiaan~A. Terwijn.
\newblock Randomness, relativization and {T}uring degrees.
\newblock {\em J. Symbolic Logic}, 70(2):515--535, 2005.

\end{thebibliography}
\end{document}